\newtheorem{thm}{Theorem}
\newtheorem{prop}[thm]{Proposition}
\newtheorem{lem}[thm]{Lemma}
\newtheorem{rem}[thm]{Remark}
\newtheorem{cor}[thm]{Corollary}
\newtheorem{defn}[thm]{Definition}
\newtheorem{qu}[thm]{Question}
\begin{document}

\begin{center}

\noindent  \large First Betti number and collapse\normalsize

\begin{normalsize}
Sergio Zamora

Max Planck Institute for Mathematics

zamora@mpim-bonn.mpg.de
\end{normalsize}
\end{center}

\begin{abstract}
We show that when a sequence of Riemannian manifolds collapses under a lower Ricci curvature bound, the first Betti number cannot drop more than the dimension. 
\end{abstract}

\section{Introduction}

For $n \in \mathbb{N}$, $ c \in \mathbb{R}$, $ D  >  0 $, let $\mathfrak{M}_{Ric}(n,c,D)$ (resp. $\mathfrak{M}_{sec}(n,c,D)$) denote the class of closed $n$-dimensional Riemannian manifolds of Ricci curvature $\geq c$ (resp. sectional curvature $\geq c$) and diameter $\leq D$. A significant proportion of the subject consists of understanding the relationship between sequences $X_i \in \mathfrak{M}_{Ric}(n,c,D)$ and their Gromov--Hausdorff limits. Our main result concerns the first Betti number of such limit space.

\begin{thm}\label{main}
\rm Let $X_i \in \mathfrak{M}_{Ric}(n,c,D)$ be a sequence with $\beta_1(X_i) \geq r$ for each $i$. If $X_i$ converges in the Gromov--Hausdorff sense to a space $X$ containing a $k$-regular point, then
\[  \beta_1(X) \geq r + k - n.  \] 
\end{thm}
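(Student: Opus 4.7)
The plan is to study $\mathbb{Z}^r$--covers of the $X_i$ and extract an equivariant Gromov--Hausdorff limit. For each $i$, fix a surjection $\pi_1(X_i) \twoheadrightarrow \mathbb{Z}^r$ (which exists since $\beta_1(X_i) \geq r$), let $\hat{X}_i \to X_i$ be the associated normal Riemannian cover, and pick lifts $\tilde{x}_i \in \hat{X}_i$ of points $x_i \to x$, the $k$-regular point of $X$. Passing to a subsequence and invoking equivariant pointed Gromov--Hausdorff convergence, one obtains
\[ (\hat{X}_i, \tilde{x}_i, \mathbb{Z}^r) \longrightarrow (Y, \tilde{x}, G), \]
with $G$ a closed subgroup of $\mathrm{Isom}(Y)$. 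Structure results for isometric group actions on Ricci-limit spaces (Colding--Naber and subsequent work) show that $G$ is an abelian Lie group; write $G_0$ for its identity component. Locally near $\tilde{x}$, the quotient $Y/G$ is isometric to $X$ near $x$.

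The key step is to show that $K := \ker(\mathbb{Z}^r \to G/G_0)$ has rank at most $n-k$. Since $x$ is $k$-regular, the tangent cone of $X$ at $x$ is $\mathbb{R}^k$; by the equivariant splitting theorem for Ricci limits, this $\mathbb{R}^k$--factor lifts to the tangent cone of $Y$ at $\tilde{x}$, which therefore splits as $\mathbb{R}^k \times C$. Elements of $K$ lie in $G_0$ in the limit, so in the tangent cone they act by isometries fixing the $\mathbb{R}^k$--factor pointwise and translating within a Euclidean subspace of $C$, producing a lattice in that subspace. Since $\dim Y \leq n$ by upper semicontinuity of the dimension of Ricci limits, such a lattice has rank at most $n-k$, giving $\mathrm{rank}(K) \leq n - k$.

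The image of $\mathbb{Z}^r$ in $G/G_0$ is dense; as $G/G_0$ is discrete the image is all of $G/G_0$, so $G/G_0 \cong \mathbb{Z}^r/K$ has rank at least $r-(n-k) = r+k-n$. The intermediate quotient $Y/G_0 \to Y/G = X$ is a normal covering with deck transformation group $\Lambda := G/G_0$, yielding a surjection of $\pi_1(X)$ onto a free abelian group of rank $\geq r+k-n$, so $\beta_1(X) \geq r+k-n$.

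The principal obstacle is the tangent-cone step: showing that $K$ genuinely respects an orthogonal splitting of the tangent cone at $\tilde{x}$ and acts as a lattice on a Euclidean summand requires delicate use of equivariant Cheeger--Colding splitting for limit group actions, and careful handling of possible isotropy on the collapsed side. A secondary point is that $X$ may be singular, so the conclusion should be interpreted as a lower bound on the rank of the largest torsion-free abelian cover of $X$, reconciled in the paper's setting with a working definition of $\beta_1$ for limit spaces.
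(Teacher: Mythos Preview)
Your outline has the right flavor but several steps do not go through as written. First, equivariant Gromov--Hausdorff convergence does not furnish a homomorphism $\mathbb{Z}^r \to G$ (or to $G/G_0$): a fixed $g \in \mathbb{Z}^r$ may move $\tilde{x}_i$ to infinity as $i\to\infty$ and hence have no limit in $G$, so your kernel $K$ is undefined. Second, even granting such a map, the assertion that the image of $K$ in $G_0$ acts on a tangent cone of $Y$ as a lattice in a Euclidean summand of $C$ is precisely the hard part and does not follow from splitting alone; elements of $G_0$ need not fix $\tilde{x}$, need not preserve the $\mathbb{R}^k$-factor pointwise, and their induced action on a blow-up need not be discrete. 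Third, concluding via a ``cover'' $Y/G_0\to X$ requires $G/G_0$ to act freely on $Y/G_0$ and $X$ to admit covering-space theory; you establish neither.

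The paper circumvents all three issues. It does not construct a cover of $X$; instead $\beta_1(X)$ enters through the Sormani--Wei surjection $H_1(X_i)\twoheadrightarrow H_1(X)$ (Theorem~\ref{SW}), which is available because $X$ is semi-locally simply connected by Pan--Wang (Theorem~\ref{slsc}). This reduces the problem to bounding the rank of the kernel $H_1^{\varepsilon}(X_i)$. The Kapovitch--Wilking results (Theorem~\ref{KW}, Lemma~\ref{KW-gap}) then identify this kernel, up to finite index, with the content $\Gamma_i$ of a single small ball at the regular point $p_i$. Finally, rather than passing to a tangent cone of a fixed limit, the paper rescales the $X_i$ themselves by $\lambda_i\to\infty$ chosen so slowly that $(\lambda_i X_i,p_i)\to\mathbb{R}^k$, takes the $H_1$-covers of the rescaled spaces, obtains a limit splitting $\mathbb{R}^k\times Z$ with $\dim Z\le n-k$ whose $Z$-fibers are exactly the $\Gamma$-orbits (Corollary~\ref{splitting}), and applies the structure theorem for discrete transitive actions (Theorem~\ref{zamora}) to the $\Gamma_i$-orbits converging to $Z$ to conclude $\operatorname{rank}\Gamma_i\le n-k$.
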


It has been known that for a  Riemannian manifold $M$ of almost non-negative Ricci curvature, if its first Betti number equals its dimension then $M$ is homeomorphic to a torus. This result has been recently extended to singular spaces by Mondello, Mondino, and Perales \cite{mondello-mondino-perales}. A consequence of their work and Theorem \ref{main} is the following.

\begin{cor}\label{tori}
\rm For each $n \in \mathbb{N}$, there is $\varepsilon > 0 $ such that if $X_i \in \mathfrak{M}_{sec}(n,-\varepsilon ,1\\
)$ is a sequence of spaces with $\beta_1(X_i )\geq n$ that converges in the Gromov--Hausdorff sense to a space $X$ of Hausdorff dimension $k$, then $X$ is bi-H\"{o}lder homeomorphic to a flat $k$-dimensional torus.
\end{cor}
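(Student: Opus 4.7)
The strategy is to sandwich $\beta_1(X)$ between $k$ and $k$, then invoke the singular rigidity theorem of Mondello--Mondino--Perales. Because $\mathrm{sec} \geq -\varepsilon$ implies $\mathrm{Ric} \geq -(n-1)\varepsilon$, the sequence $X_i$ lies in $\mathfrak{M}_{Ric}(n, -(n-1)\varepsilon, 1)$ and the hypotheses of Theorem \ref{main} are available. Since lower sectional curvature bounds pass to Gromov--Hausdorff limits, the limit $X$ is an Alexandrov space of curvature $\geq -\varepsilon$; its Alexandrov dimension coincides with its Hausdorff dimension $k \leq n$, and by Burago--Gromov--Perelman it has a dense set of $k$-regular points (points whose tangent cone is $\mathbb{R}^k$).

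First I would apply Theorem \ref{main} at such a $k$-regular point with $r = n$, which gives the lower bound
\[ \beta_1(X) \geq n + k - n = k. \]
For the matching upper bound $\beta_1(X) \leq k$, I would appeal to the non-smooth Gromov--Gallot inequality on RCD$(-(k-1)\varepsilon, k)$ spaces, which is part of the Mondello--Mondino--Perales framework and yields $\beta_1(X) \leq k$ once $\varepsilon$ is small. The RCD bound on the limit is legitimate because Alexandrov spaces of curvature $\geq -\varepsilon$ are RCD$(-(k-1)\varepsilon, k)$ (Petrunin) and RCD bounds are stable under measured GH convergence. Combining the two inequalities, $\beta_1(X) = k = \dim_H X$, so the almost-non-negative Ricci rigidity of Mondello--Mondino--Perales applies and produces a bi-H\"{o}lder homeomorphism from $X$ to a flat $k$-dimensional torus.

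The main obstacle I anticipate is aligning the singular hypotheses: one has to make sure that the same smallness threshold for $\varepsilon$ simultaneously produces the Gallot-type upper bound $\beta_1(X) \leq k$, supplies the almost-non-negative Ricci condition on the limit, and falls within the regime where the MMP rigidity is valid. Once these thresholds are chosen consistently (shrinking $\varepsilon$ to the minimum of the three), the corollary is an immediate consequence of Theorem \ref{main} and the Mondello--Mondino--Perales theorem.
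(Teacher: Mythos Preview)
Your proposal is correct and follows essentially the same route as the paper: obtain $\beta_1(X)\geq k$ from Theorem~\ref{main} at a $k$-regular point, then invoke the Mondello--Mondino--Perales rigidity. The paper's proof is the two-line version of yours; the only difference is that you separately argue $\beta_1(X)\leq k$ via the RCD Gallot bound, whereas the paper absorbs this into the black-box statement of Theorem~\ref{torus-stability}, which is phrased with the hypothesis $\beta_1(X)\geq k$ rather than $\beta_1(X)=k$, making your upper-bound step redundant (though not wrong). Your explicit justification that $X$, as an Alexandrov space of Hausdorff dimension $k$, has $k$-regular points is a detail the paper leaves implicit.
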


\begin{rem}
\rm Theorem \ref{main} shows that the first Betti number cannot drop more than the dimension. Contrastingly, the fundamental group can decrease in the limit even if there is no collapse: Otsu has constructed a sequence of metrics in $\mathbb{S}^3 \times \mathbb{R}P^2$ of positive Ricci curvature that converges in the Gromov--Hausdorff sense to a simply connected 5-dimensional space \cite{otsu}.
\end{rem}

Theorem \ref{main} is an improvement of the main result of \cite{Z}. On the other hand, the goal of this program is to solve following problem.

\begin{qu}\label{tori-conjecture}
\rm Assume a sequence $X_i \in \mathfrak{M}_{Ric}(n,c,D ) $ of spaces homeomorphic to the $n$-dimensional torus converges in the Gromov--Hausdorff sense to a space $X$. Is $X$ necessarily homeomorphic to a torus?
\end{qu}

The author would like to thank Anton Petrunin for suggesting Question \ref{tori-conjecture} and motivating the author to work on this problem, Raquel Perales and Guofang Wei for interesting discussions and helpful comments, and the Max Planck Institute for Mathematics for its financial support and hospitality.

\section{Preliminaries}

In this section we recall the required material for Theorem \ref{main} and Corollary \ref{tori}, which we prove in the following section.

\subsection{Gromov--Hausdorff topology}

The basics on the subject can be found in (\cite{BBI}, Chapter 7). 
\begin{defn}
\rm We say that a function $f: X \to Y$ between metric spaces is an $\varepsilon$-isometry if for all $x_1, x_2 \in X$ one has $ \vert d^X(x_1,x_2) - d^Y(fx_1, fx_2) \vert \leq \varepsilon $, and $f(X)$ intersects each closed ball of radius $\varepsilon$ in $Y$. We say that a sequence of functions $f_i : X_i \to Y_i$ between metric spaces are \textit{Gromov--Hausdorff approximations} if $f_i$ is an $\varepsilon_i $-isometry for some sequence $\varepsilon_i \to 0$.  
\end{defn}

\begin{prop}\label{GH-convergence}
\rm (Gromov) Let $X_i$ be a sequence of compact metric spaces, and let $X$ be a complete metric space. Then the following are equivalent:
\begin{itemize}
\item There is a sequence $f_i : X_i \to X$ of Gromov--Hausdorff approximations.
\item There is a sequence $h_i : X \to X_i$ of Gromov--Hausdorff approximations.
\end{itemize}
In either case, $X$ is compact and one says that the sequence $X_i$ \textit{converges to $X$ in the Gromov--Hausdorff sense}. Furthermore, there is a metric on the class of compact metric spaces modulo isometry that yields this topology.
\end{prop}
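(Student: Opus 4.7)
The plan is to establish the three assertions in order: the equivalence of the two approximation conditions, the compactness of $X$, and the existence of a metric inducing this topology. Throughout, the key technical tool is that an $\varepsilon$-isometry $f:X\to Y$ is, up to small error, invertible: the near-surjectivity lets one pick a map back, and the almost-isometric distortion of $f$ controls the distortion of the pick.

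For the equivalence of the two bulleted conditions, I would argue directly by construction. Given $f_i:X_i\to X$ that is an $\varepsilon_i$-isometry, for every $x\in X$ the $\varepsilon_i$-surjectivity of $f_i$ produces some point $h_i(x)\in X_i$ with $d^X(f_i h_i(x), x)\le \varepsilon_i$. Using the almost-isometric property of $f_i$ twice, one checks that $|d^X(x_1,x_2) - d^{X_i}(h_i(x_1),h_i(x_2))| \le 3\varepsilon_i$, and near-surjectivity of $h_i$ follows because $h_i(f_i(y))$ and $y$ are within $2\varepsilon_i$ in $X_i$ for every $y\in X_i$. So $h_i$ is a $3\varepsilon_i$-isometry, and the reverse implication is symmetric.

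For compactness of $X$, I would prove $X$ is totally bounded and invoke completeness. Fix $\delta>0$, choose $i$ with $\varepsilon_i<\delta/3$, and let $F\subset X_i$ be a finite $(\delta/3)$-net, which exists because $X_i$ is compact. Then $f_i(F)\subset X$ is a finite set, and near-surjectivity of $f_i$ together with its almost-isometry property show that $f_i(F)$ is a $\delta$-net in $X$. Since $\delta$ was arbitrary, $X$ is totally bounded; together with completeness this gives compactness.

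For the metric statement, I would define
\[ d_{GH}(Y,Z) = \inf\{\varepsilon>0 : \text{there exist } \varepsilon\text{-isometries } Y\to Z \text{ and } Z\to Y\}, \]
verify that it is symmetric and satisfies a triangle inequality (compositions of $\varepsilon$-isometries are $O(\varepsilon)$-isometries), and check that $d_{GH}(Y,Z)=0$ with both compact forces $Y$ isometric to $Z$. The last point is the main obstacle: one takes approximations $f_n:Y\to Z$ with distortion $\varepsilon_n\to 0$, restricts to a countable dense subset of $Y$, and uses a diagonal/Arzel\`a--Ascoli extraction (possible because $Z$ is compact) to obtain a pointwise limit $f:Y\to Z$; this $f$ is distance-preserving, and an analogous argument yields a distance-preserving $g:Z\to Y$, so both are surjective isometries. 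Finally, one checks that convergence in the $d_{GH}$-metric is equivalent to the existence of approximations as in the two bullets, which is immediate from the definition and the equivalence already established. The metric descends to the quotient by isometry since $d_{GH}$ vanishes precisely on isometric pairs.
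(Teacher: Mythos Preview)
The paper does not prove this proposition; it is stated as a classical result of Gromov with a pointer to \cite{BBI}, Chapter 7. So there is no in-paper argument to compare against, and your proposal is simply a self-contained proof of a textbook fact.

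Your argument for the equivalence of the two bullets and for the compactness of $X$ is correct and standard; the constants you obtain ($3\varepsilon_i$, $2\varepsilon_i$) are the usual ones. One genuine wrinkle is in the metric portion: with your definition
\[
d_{GH}(Y,Z)=\inf\{\varepsilon>0 : \text{there exist }\varepsilon\text{-isometries }Y\to Z\text{ and }Z\to Y\},
\]
the triangle inequality does not hold exactly. Composing an $\varepsilon$-isometry with a $\delta$-isometry gives distortion $\le \varepsilon+\delta$ but near-surjectivity only within $\varepsilon+2\delta$, so one gets $d_{GH}(Y,W)\le 2\bigl(d_{GH}(Y,Z)+d_{GH}(Z,W)\bigr)$ rather than the strict triangle inequality. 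Your parenthetical ``compositions of $\varepsilon$-isometries are $O(\varepsilon)$-isometries'' tacitly concedes this, but then ``verify \ldots\ a triangle inequality'' overstates what you can get. The fix is standard: either define $d_{GH}$ via isometric embeddings into a common ambient space (where the triangle inequality is immediate from gluing), and then show it is bi-Lipschitz equivalent to your quantity, or accept a quasi-metric and note it induces the same topology. The rest of your outline---the Arzel\`a--Ascoli extraction to show $d_{GH}=0$ forces isometry between compacta, and the identification of $d_{GH}$-convergence with the existence of approximations---is correct as sketched.
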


\begin{defn}
\rm We say that a function $f: (X,x) \to (Y,y)$ between  pointed metric spaces is an $\varepsilon$-isometry if $fx=y$, for all $x_1, x_2 \in B^X(x, 2 / \varepsilon )$ one has $ \vert d^X(x_1,x_2) - d^Y(fx_1, fx_2) \vert \leq \varepsilon $, and $f(B^X(x, 2 / \varepsilon ) )$ intersects each closed ball of radius $\varepsilon$ in $B^Y(y, 1 / \varepsilon )$. We say that a sequence of functions $f_i : ( X_i ,x_i) \to  (Y_i, y_i ) $ between pointed metric spaces are \textit{pointed Gromov--Hausdorff approximations} if $f_i$ is a pointed $\varepsilon_i $-isometry for some sequence $\varepsilon_i \to 0$.  
\end{defn}

\begin{prop}\label{pointed-GH-convergence}
\rm (Gromov) Let $(X_i,x_i)$ be a sequence of proper pointed metric spaces, and let $(X,x)$ be a complete pointed metric space. Then the following are equivalent:
\begin{itemize}
\item There is a sequence $f_i : (X_i ,x_i) \to (X,x)$ of pointed Gromov--Hausdorff approximations.
\item There is a sequence $h_i : (X,x) \to (X_i,x_i)$ of pointed Gromov--Hausdorff approximations.
\end{itemize}
In either case, $X$ is proper and one says that the sequence $(X_i,x_i)$ \textit{converges to $(X,x)$ in the pointed Gromov--Hausdorff sense}. Furthermore, there is a metric on the class of proper pointed metric spaces modulo isometry that yields this topology.
\end{prop}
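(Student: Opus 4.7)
The plan is to deduce the pointed statement from the compact version (Proposition \ref{GH-convergence}) applied to the closed metric balls $\overline{B}^{X_i}(x_i, R)$ and $\overline{B}^X(x, R)$ for each $R > 0$. The bridge is the observation that a pointed $\varepsilon$-isometry $f : (X,x) \to (Y,y)$ gives, after slight adjustment of radii, an ordinary $O(\varepsilon)$-Gromov--Hausdorff approximation between $\overline{B}^X(x, R)$ and $\overline{B}^Y(y, R)$ whenever $R$ is much smaller than $1/\varepsilon$, and conversely that ball approximations for diverging radii can be patched into a pointed approximation defined on the whole space.

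To establish the equivalence of the two conditions, suppose $f_i : (X_i, x_i) \to (X, x)$ are pointed $\varepsilon_i$-isometries with $\varepsilon_i \to 0$. I would construct $h_i : (X, x) \to (X_i, x_i)$ by setting $h_i(x) = x_i$, and, for each $y \in B^X(x, 1/\varepsilon_i)$, using the approximate surjectivity clause of the definition to select $h_i(y) \in B^{X_i}(x_i, 2/\varepsilon_i)$ with $d^X(f_i(h_i(y)), y) \leq \varepsilon_i$. A direct triangle-inequality computation using the distortion bound of $f_i$ on $B^{X_i}(x_i, 2/\varepsilon_i)$ yields $\bigl| d^X(y_1, y_2) - d^{X_i}(h_i(y_1), h_i(y_2)) \bigr| \leq 3 \varepsilon_i$ for $y_1, y_2 \in B^X(x, 1/\varepsilon_i)$, while the same estimate applied to $z \in B^{X_i}(x_i, 1/(3\varepsilon_i))$ and its approximate image $h_i(f_i(z))$ shows $h_i$ is approximately surjective onto $B^{X_i}(x_i, 1/\delta_i)$ for a suitable $\delta_i \to 0$. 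The reverse implication is symmetric.

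Properness of $X$ follows from totally-bounded ball estimates: fix $R, \eta > 0$ and take $i$ so large that $\varepsilon_i < \eta$ and $R + 1 < 1/\varepsilon_i$. Properness of $X_i$ gives a finite $\eta$-net $\{p^{(i)}_1, \ldots, p^{(i)}_N\}$ of the compact ball $\overline{B}^{X_i}(x_i, R + 1)$, and by approximate surjectivity plus the distortion bound of $f_i$, the images $\{f_i(p^{(i)}_j)\}_j$ form a $(2\eta + \varepsilon_i)$-net in $\overline{B}^X(x, R)$. Hence every closed ball in $X$ is totally bounded; combined with completeness of $X$, this gives compactness of all closed balls, i.e.\ properness.

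For the metric realising the topology, I would use the standard weighted sum
\[
d_{pGH}\bigl((X,x), (Y,y)\bigr) = \sum_{n = 1}^{\infty} 2^{-n} \min \bigl\{ 1,\, d_{GH}\bigl(\overline{B}^X(x, n), \overline{B}^Y(y, n)\bigr) \bigr\},
\]
with the restricted metrics on the balls and $d_{GH}$ the compact metric from Proposition \ref{GH-convergence}. Symmetry and the triangle inequality descend from each summand, while $d_{pGH} = 0$ forces every closed ball to be isometric, and a standard diagonal argument produces a basepoint-preserving isometry $X \to Y$. That $d_{pGH} \to 0$ is equivalent to the existence of pointed approximations is the subtle step: the easy direction uses the bridge observation above, while the hard direction requires turning a GH-approximation of a large ball into a genuinely pointed $\varepsilon$-isometry, which involves slightly shrinking the radius to absorb the error in both the distance-distortion and approximate-surjectivity clauses. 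This radius bookkeeping, together with ensuring that ball-by-ball approximations can be extracted compatibly as $R \to \infty$, is the main technical obstacle, and is the point at which the argument genuinely uses properness rather than mere completeness.
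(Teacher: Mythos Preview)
The paper does not supply a proof of this proposition; it is stated in the preliminaries as a known result attributed to Gromov, with the reader referred to \cite{BBI}, Chapter 7 for the basics of Gromov--Hausdorff theory. So there is no ``paper's own proof'' to compare against.

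Your sketch follows the standard textbook route and is essentially correct. The approximate-inverse construction for the equivalence, the total-boundedness argument for properness, and the weighted-sum metric over closed balls are all the customary ingredients. The constants and radius bookkeeping you indicate (distortion $\leq 3\varepsilon_i$, choosing $\delta_i$ a fixed multiple of $\varepsilon_i$) work out with minor care. One small point worth flagging: in the identity-of-indiscernibles step for $d_{pGH}$, having $d_{GH}(\overline{B}^X(x,n),\overline{B}^Y(y,n))=0$ for every $n$ gives an isometry of each pair of balls, but these isometries need not be nested, so the diagonal/Arzel\`a--Ascoli extraction you mention is genuinely needed and relies on properness to get compactness of the relevant isometry sets. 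You have identified this correctly as the place where properness (not just completeness) enters.
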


For $n \in \mathbb{N}$, $c \in \mathbb{R}$, we denote by $\mathfrak{M}_{Ric} (n,c)$ the class of complete $n$-dimensional Riemannian manifolds of Ricci curvature $\geq c$. One reason we know so much about these families of spaces is because they are pre-compact with respect to the Gromov--Hausdorff topology.

\begin{thm}\label{compactness}
\rm (Gromov) Let $(Y_i,y_i)$ be a sequence with $Y_i \in \mathfrak{M}_{Ric}(n,c) $ for each $i$. Then one can find a subsequence that converges in the pointed Gromov--Hausdorff sense to some proper metric space $(Y,y)$.
\end{thm}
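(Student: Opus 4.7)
The plan is to combine the Bishop--Gromov volume comparison theorem with a Cantor diagonal argument, which is the classical route to Gromov's precompactness.

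First I would invoke Bishop--Gromov: for any $Y \in \mathfrak{M}_{Ric}(n,c)$ and any $y \in Y$, the ratio $\mathrm{vol}(B(y,R))/V_{n,c}(R)$ is non-increasing in $R$, where $V_{n,c}(R)$ denotes the volume of a ball of radius $R$ in the $n$-dimensional simply connected model of constant sectional curvature $c/(n-1)$. A standard packing argument then yields a function $N(n,c,R,\varepsilon)$, independent of $i$, bounding the cardinality of any $\varepsilon$-separated subset of the closed ball $\overline{B}(y_i,R) \subset Y_i$. In particular every such ball admits an $\varepsilon$-net of cardinality at most $N(n,c,R,\varepsilon)$.

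Next I would construct a candidate limit. For each pair $(R,k) \in \mathbb{N} \times \mathbb{N}$, pick a maximal $(1/k)$-separated subset $S_i^{R,k} \subset \overline{B}(y_i,R)$ containing $y_i$; by the previous step these are pointed finite metric spaces of uniformly bounded diameter and cardinality. Up to passing to a subsequence, $S_i^{R,k}$ converges in the pointed Gromov--Hausdorff sense to some pointed finite metric space $S^{R,k}$ (for finite metric spaces of bounded cardinality and diameter this is just convergence of pairwise distances, extracted by finite-dimensional Bolzano--Weierstrass). A diagonal argument over $(R,k)$ produces one subsequence along which \emph{all} the $S_i^{R,k}$ converge simultaneously. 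The natural inclusions $S_i^{R,k} \subset S_i^{R',k'}$ for $R \leq R'$ and $k \leq k'$ pass to the limit and allow me to define $Y := \overline{\bigcup_{R,k} S^{R,k}}$ with the obvious metric and basepoint $y$ coming from $y_i$.

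Finally I would verify that $(Y_i,y_i) \to (Y,y)$ in the pointed Gromov--Hausdorff sense and that $(Y,y)$ is proper. The convergence is obtained by assembling pointed $\varepsilon_i$-isometries $f_i : (Y_i,y_i) \to (Y,y)$ from the convergence of the nets $S_i^{R,k}$ together with a nearest-point retraction $\overline{B}(y_i,R) \to S_i^{R,k}$; on each fixed ball the distance distortion is controlled by $1/k$ plus the Gromov--Hausdorff error between $S_i^{R,k}$ and $S^{R,k}$. Properness follows because each closed ball in $Y$ inherits the finite $\varepsilon$-net $S^{R,k}$ for every $k$, so it is totally bounded, and $Y$ is complete by construction.

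The main obstacle is not any single step but the bookkeeping of the diagonal extraction and the verification that the limiting $\varepsilon$-nets fit together consistently as $(R,k)$ varies; once one commits to working with nested maximal separated sets and passes carefully to subsequences, the Bishop--Gromov bound does all of the geometric work.
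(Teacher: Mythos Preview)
Your proposal is the classical proof and is correct. The paper, however, does not supply its own proof of this theorem: it is stated in the preliminaries as a known result attributed to Gromov, with no argument given. So there is nothing to compare against beyond noting that what you wrote is exactly the standard Bishop--Gromov plus diagonal-extraction argument one finds in the literature (e.g.\ Chapter~7 of \cite{BBI}, which the paper cites for background on Gromov--Hausdorff convergence). Your acknowledgment that the bookkeeping of nested nets is the only real nuisance is accurate; the geometric input is entirely contained in the uniform packing bound coming from volume comparison.
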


\subsection{Equivariant Gromov--Hausdorff convergence}

There is a well studied notion of convergence of group actions in this setting. For a proper metric space $X$, the topology that we use on its group of isometries $Iso(X)$ is the compact-open topology, which in this setting coincides with both the topology of pointwise convergence and the topology of uniform convergence on compact sets. This topology makes $Iso(X)$ a locally compact second countable metrizable group.

\begin{defn}
\rm Let $(Y_i,q_i) $ be a sequence of proper metric spaces that converges in the pointed Gromov--Hausdorff sense to a proper space $(Y,q)$. Consider pointed Gromov--Hausdorff approximations $f_i : (Y_i,q_i ) \to (Y,q) $ and $h_i: (Y,q) \to (Y_i,q_i)$ such that $d^Y(f_i \circ h_i(y),y)\to 0$ for all $y \in Y$. Also let $\Gamma_i \leq Iso(Y_i)$ be a sequence of groups of isometries. We say that $\Gamma_i$ \textit{converges in the equivariant Gromov--Hausdorff sense to} a closed group $\Gamma \leq Iso (Y)$ if for all $R, \varepsilon > 0 $, one has the following:
\begin{itemize}
\item For each $g \in \Gamma$, there is $ i_0 \in \mathbb{N}$ such that for each $i \geq i_0$ there is $g_i \in \Gamma_i$ with $d^Y ( f_i \circ g_i \circ h_i (y), g(y)) \leq \varepsilon $ for all $y \in B^{Y}(q,R )$.
\item There is $i_0\in \mathbb{N}$ such that if $i \geq i_0$, $g \in \Gamma_i$ with $d^Y(gq_i,q_i)\leq R$, then there is $\gamma \in \Gamma $ such that $d ^Y( f_i \circ g \circ h_i (y), \gamma (y)) \leq \varepsilon $ for all $y \in B^{Y}(q,10R)$.
\end{itemize}
Although this definition clearly depends on $f_i$ and $h_i$, we usually omit this when we state that $\Gamma_i$ converges to $\Gamma$.
\end{defn}

This definition of equivariant convergence allows one to take limits before or after taking quotients.

\begin{lem}\label{equivariant}
\rm Let $(Y_i,q_i)$ be a sequence of proper metric spaces that converges in the pointed Gromov--Hausdorff sense to a proper space $(Y,q)$, and  $\Gamma_i \leq Iso(Y_i)$ a sequence of isometry groups that converges in the equivariant Gromov--Hausdorff sense to a closed group $\Gamma \leq Iso (Y)$. Then the sequence $(Y_i/\Gamma_i, [q_i])$ converges in the pointed Gromov--Hausdorff sense to $(Y/\Gamma , [q])$.
\end{lem}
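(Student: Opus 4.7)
The plan is to descend the given Gromov--Hausdorff approximations $f_i : (Y_i,q_i) \to (Y,q)$ and $h_i : (Y,q) \to (Y_i,q_i)$ to the quotients and verify they remain approximations there. For each $[y] \in Y_i/\Gamma_i$, I would select a representative $y \in Y_i$ nearly minimizing $d^{Y_i}(y, q_i)$ among the orbit of $[y]$, and define $\bar{f}_i([y]) := [f_i(y)]$. By Proposition \ref{pointed-GH-convergence}, it suffices to check that for each fixed $R, \varepsilon > 0$ and all sufficiently large $i$, this map restricted to $B^{Y_i/\Gamma_i}([q_i], R)$ is an $\varepsilon$-isometry onto an $\varepsilon$-dense subset of $B^{Y/\Gamma}([q], R)$.

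\emph{Distance estimate, upper bound.} Take $[x_1], [x_2] \in B^{Y_i/\Gamma_i}([q_i], R)$ with chosen lifts $x_1, x_2 \in B^{Y_i}(q_i, R+1)$, and pick $g_i \in \Gamma_i$ with $d^{Y_i}(x_1, g_i x_2)$ realizing $d^{Y_i/\Gamma_i}([x_1],[x_2])$ up to $\varepsilon$. Since $g_i$ is an isometry, the triangle inequality forces $d^{Y_i}(g_i q_i, q_i) \leq 4R + 3$. The second clause of equivariant convergence then supplies $\gamma \in \Gamma$ with $d^Y(f_i \circ g_i \circ h_i(y), \gamma(y)) \leq \varepsilon$ for all $y \in B^Y(q, 10(4R+3))$. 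Evaluating at $y = f_i(x_2)$ and using that $h_i \circ f_i$ is close to the identity on bounded sets yields $d^{Y/\Gamma}([f_i x_1], [f_i x_2]) \leq d^{Y_i/\Gamma_i}([x_1],[x_2]) + O(\varepsilon)$.

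\emph{Distance estimate, lower bound, and surjectivity.} For the reverse inequality, pick $\gamma \in \Gamma$ nearly realizing the infimum defining $d^{Y/\Gamma}([f_ix_1],[f_ix_2])$. Since that infimum is bounded by $2R + o(1)$, one has $d^Y(\gamma q, q) \leq 4R + 3$, and the first clause of equivariant convergence supplies $g_i \in \Gamma_i$ whose action approximates $\gamma$ on a large ball; reading off the resulting distance $d^{Y_i}(x_1, g_i x_2)$ via the $\varepsilon_i$-isometry property of $f_i$ gives $d^{Y_i/\Gamma_i}([x_1],[x_2]) \leq d^{Y/\Gamma}([f_ix_1],[f_ix_2]) + O(\varepsilon)$. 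For approximate surjectivity, any $[y] \in B^{Y/\Gamma}([q], R)$ with lift $y \in B^Y(q, R+1)$ has $\bar{f}_i([h_i(y)]) = [f_i \circ h_i(y)] \to [y]$, which is all one needs.

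\emph{Main obstacle.} The only real subtlety is bookkeeping: every time one invokes the second clause of equivariant convergence, one must check that all relevant points (such as $f_i(x_2)$ and its near-translate $f_i(g_i x_2) \approx \gamma f_i(x_2)$) genuinely lie inside the ball $B^Y(q, 10R)$ on which the approximation $f_i \circ g_i \circ h_i \approx \gamma$ holds, and that the chosen $g_i$ displaces $q_i$ by at most the radius controlled by $R$. The factor of $10$ built into the definition of equivariant convergence provides exactly the slack needed to close these inclusions without circularity; verifying this carefully is where I would expect most of the work to go.
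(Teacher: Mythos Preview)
The paper does not prove this lemma; it is stated in the preliminaries as a standard fact about equivariant Gromov--Hausdorff convergence (the result goes back to Fukaya--Yamaguchi \cite{fukaya-yamaguchi}), so there is nothing to compare your argument against.

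That said, your outline is the standard proof and is essentially correct. Two small points of bookkeeping are worth flagging. First, you use that $h_i\circ f_i$ is close to the identity on bounded subsets of $Y_i$, but the paper's setup only postulates $d^Y(f_i\circ h_i(y),y)\to 0$ for $y\in Y$; the statement you need does follow from this together with the $\varepsilon_i$-isometry property of $f_i$ and $h_i$ and the almost-surjectivity of $h_i$, but it takes a line to justify. Second, your map $\bar f_i$ depends on the chosen orbit representative, so in the surjectivity step $\bar f_i([h_i(y)])$ is not literally $[f_i(h_i(y))]$; you should note (again via the second clause of the definition) that any two representatives of $[h_i(y)]$ lying in a ball of radius $\sim R$ around $q_i$ differ by some $g_i\in\Gamma_i$ displacing $q_i$ by $\lesssim 2R$, hence their $f_i$-images land in the same $\Gamma$-orbit up to $O(\varepsilon)$. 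With these two remarks made explicit, the argument closes.
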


Since the isometry groups of proper metric spaces are locally compact, one has an Arzel\'a-Ascoli type result (\cite{fukaya-yamaguchi}, Proposition 3.6).

\begin{thm}\label{equivariant-compactness}
\rm (Fukaya--Yamaguchi) Let $(Y_i,q_i) $ be a sequence of proper metric spaces that converges in the pointed Gromov--Hausdorff sense to a proper space $(Y,q)$, and take a sequence $\Gamma_i \leq Iso(Y_i)$ of groups of isometries. Then there is a subsequence $(Y_{i_k}, q_{i_k}, \Gamma_{i_k})_{k \in \mathbb{N}}$ such that $\Gamma_{i_k}$ converges in the equivariant Gromov--Hausdorff sense to a closed group $\Gamma \leq Iso(Y)$.
\end{thm}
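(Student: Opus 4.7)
The plan is to prove this Fukaya--Yamaguchi compactness statement by an Arzel\`a--Ascoli-type argument combined with a diagonal extraction, viewing the elements of $\Gamma_i$ as almost-isometries of the limit space $Y$ via the given Gromov--Hausdorff approximations.

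Fix approximations $f_i:(Y_i,q_i)\to(Y,q)$ and $h_i:(Y,q)\to(Y_i,q_i)$ with errors $\varepsilon_i \to 0$, as provided by Proposition \ref{pointed-GH-convergence}. For $g \in \Gamma_i$, write $\tilde g := f_i \circ g \circ h_i : Y \to Y$, and set $\Gamma_i(R) := \{ g \in \Gamma_i : d(g q_i, q_i) \le R \}$. Because $g$ is an isometry and $f_i, h_i$ are $\varepsilon_i$-approximations, every $\tilde g$ with $g \in \Gamma_i(R)$ is an $O(\varepsilon_i)$-almost isometry on the relevant balls, with $\tilde g(q) \in B(q, R + \varepsilon_i)$. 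Properness of $Y$ together with Arzel\`a--Ascoli then yields: any sequence $g_i \in \Gamma_i(R)$ admits a subsequence for which $\tilde g_i$ converges uniformly on compact sets to an isometric embedding of $Y$; running the same argument on the inverses (which also lie in $\Gamma_i(R)$ since $g$ is an isometry) upgrades the limit to a genuine isometry of $Y$.

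To obtain a single subsequence of $i$ that handles every bounded-displacement sequence at once, fix an increasing exhaustion $R_n \uparrow \infty$ and, for each pair $(i,n)$, choose a finite $\varepsilon_n$-net $N_{i,n} \subset \Gamma_i(R_n)$ with respect to the pseudo-metric $\rho(g,g') := \sup_{y \in B(q_i, R_n)} d(gy, g'y)$. Since $Y_i$ is Gromov--Hausdorff close to $Y$, the covering numbers of $B(q_i, R_n)$ admit bounds independent of $i$, so $|N_{i,n}|$ may be taken bounded in $i$ for each fixed $n$. A diagonal extraction over $n$ and the (finitely many) elements of $N_{i,n}$ yields a subsequence so that, for every $n$ and every net element, $\tilde g$ converges uniformly on compact sets to an isometry of $Y$. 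Define $\Gamma \le \mathrm{Iso}(Y)$ as the closure of the collection of all such limit isometries; the composition identity $\widetilde{gg'} \approx \tilde g \circ \tilde{g'}$ up to $O(\varepsilon_i)$ on compact sets, together with the closure operation, shows $\Gamma$ is a closed subgroup.

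The two equivariant convergence conditions then follow directly. The first is built into the construction: each $\gamma \in \Gamma$ arises as a limit of elements in suitable $N_{i,n}$, producing the required $g_i \in \Gamma_i$. For the second, given any $g \in \Gamma_i$ with $d(g q_i, q_i) \le R$, pick $n$ with $R_n \ge R$ and $\varepsilon_n$ small; then $g$ lies within $\rho$-distance $\varepsilon_n$ of some element of $N_{i,n}$, whose associated $\tilde{(\cdot)}$ converges to an element of $\Gamma$ witnessing the approximation. The main difficulty I expect is not conceptual but organizational: one must carefully arrange the $\varepsilon_n$-nets $N_{i,n}$ so that they are simultaneously finite, uniformly bounded in $i$, and compatible under the diagonal extraction, so that a single subsequence serves all bounded-displacement elements at once. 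This uniformity, rather than any individual Arzel\`a--Ascoli application, is the real content that distinguishes equivariant convergence from pointwise compactness.
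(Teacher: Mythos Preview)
The paper does not supply its own proof of this statement: it is quoted as a result of Fukaya--Yamaguchi (their Proposition 3.6) and used as a black box, so there is nothing to compare your argument against inside the paper itself.

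That said, your sketch is the standard Arzel\`a--Ascoli/diagonal argument that underlies the original proof, and it is essentially correct. Two points deserve tightening. First, the nets $N_{i,n}$ vary with $i$, so the diagonal extraction should be phrased in terms of a fixed number $K_n$ of ``slots'' (using your uniform bound $|N_{i,n}|\le K_n$) rather than in terms of specific group elements; one then extracts so that, for each $n$ and each occupied slot, the transplanted maps converge. Second, in verifying the second equivariant condition you must pick $n$ with both $R_n$ large enough to cover the ball $B^Y(q,10R)$ appearing in the definition and $\varepsilon_n$ small relative to the target $\varepsilon$, and only then choose $i_0$ so large that the slot limits have stabilized to within $\varepsilon$; as written, your last paragraph fixes $n$ from $R$ alone and does not explain how the fixed net scale $\varepsilon_n$ is made smaller than an arbitrary $\varepsilon$. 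These are organizational rather than conceptual issues, and once addressed your outline gives a complete proof.
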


In \cite{gromov-afm}, Gromov studied which is the structure of discrete groups that act transitively on spaces that look like $\mathbb{R}^n$. Using the Malcev embedding theorem, he showed that they look essentially like lattices in nilpotent Lie groups. In \cite{breuillard-green-tao}, Breuillard--Green--Tao studied in general what is the structure of discrete groups that have a large portion acting on a space of controlled doubling. It turns out that the answer is still essentially just lattices in nilpotent Lie groups. In (\cite{zamora-lahs}, Sections 7-9) the ideas from \cite{gromov-afm} and \cite{breuillard-green-tao} are used to obtain the following structure result.

\begin{thm}\label{zamora}
\rm Let $(Z,p)$ be a proper pointed geodesic space of topological dimension $\ell \in \mathbb{N}$ and let $(D_i , p_i )$ be a sequence of discrete metric spaces converging in the pointed Gromov--Hausdorff sense to $(Z,p)$. Assume there is a sequence of isometry groups $\Gamma_i \leq Iso (D_i)$ that act transitively and for each $i$, $\Gamma_i$ is generated by its elements that move $p_i$ at most $10$. Then for large enough $i$, there are finite index subgroups $G_i \leq \Gamma_i$ and finite normal subgroups $F_i \triangleleft G_i  $ such that $G_i / F_i $ is isomorphic to  a quotient of a lattice in a nilpotent Lie group of dimension $\ell$. In particular, if the groups $\Gamma_i$ are abelian, for large enough $i$ their rank is at most $\ell$. 
\end{thm}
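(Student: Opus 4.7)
The plan is to combine the transitivity of the $\Gamma_i$-action with the finite topological dimension of the limit in order to invoke a finitary version of Gromov's polynomial growth theorem, namely the Breuillard--Green--Tao structure theorem, and then to pin down the dimension of the resulting nilpotent Lie group by an equivariant limit argument in the spirit of \cite{gromov-afm}. I would proceed in three steps.

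First, I would establish uniform doubling of $D_i$ at bounded scales. By Theorem \ref{equivariant-compactness}, after passing to a subsequence, $\Gamma_i$ converges equivariantly to a closed group $\Gamma \leq Iso(Z)$; since each $\Gamma_i$ acts transitively on $D_i$, the limit $\Gamma$ acts transitively on $Z$. Montgomery--Zippin applied to this finite-dimensional homogeneous locally compact space forces $Z$ to be a topological $\ell$-manifold, in particular doubling at bounded scales. This doubling transfers back to the $D_i$ for large $i$, and through transitivity becomes doubling of word balls in $\Gamma_i$ with respect to the symmetric generating set $S_i = \{\gamma \in \Gamma_i : d(\gamma p_i, p_i) \leq 10\}$ supplied by the hypothesis.

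Second, the finitary Breuillard--Green--Tao theorem then yields subgroups $G_i \leq \Gamma_i$ of index bounded in terms of the doubling constant, together with finite normal subgroups $F_i \triangleleft G_i$, such that $G_i/F_i$ is nilpotent of bounded class. After possibly enlarging $F_i$ to absorb torsion, Malcev embeds $G_i/F_i$ as a lattice in a simply connected nilpotent Lie group $N_i$, in particular realizing $G_i/F_i$ as a quotient of a lattice in $N_i$. This gives everything in the conclusion except the assertion $\dim N_i = \ell$.

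The main obstacle is this dimension identification. I would equip $N_i$ with a left-invariant metric, rescaled so that the orbit map $N_i \to D_i$ at $p_i$ is an approximate isometry on balls of fixed bounded radius, and extract a subsequential equivariant limit. The resulting object is a simply connected nilpotent Lie group $N$ acting transitively on $Z$; since the $D_i$ are uniformly discrete, the stabilizer of $p$ in $N$ is discrete, so $\dim N = \dim Z = \ell$, forcing $\dim N_i = \ell$ for all large $i$. The abelian consequence is then immediate: $G_i/F_i$ is a quotient of a lattice in the abelian $\ell$-dimensional Lie group $\mathbb{R}^\ell$, hence has rank at most $\ell$, and finiteness of $F_i$ and $[\Gamma_i : G_i]$ preserves rank.
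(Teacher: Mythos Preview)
This theorem is not proved in the present paper; it is quoted from \cite{zamora-lahs}, Sections 7--9, with only the one-line indication that the argument combines the ideas of \cite{gromov-afm} and \cite{breuillard-green-tao}. Your three-step outline --- Montgomery--Zippin on the homogeneous limit to get doubling, then Breuillard--Green--Tao for the virtual nilpotent structure, then a Gromov-type equivariant limit to pin down the dimension --- is exactly the strategy the paper attributes to \cite{zamora-lahs}, so there is nothing to compare beyond saying your plan matches the cited method.

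One place where your sketch is imprecise rather than wrong: in Step~3 you write of an ``orbit map $N_i \to D_i$'' that is an approximate isometry on bounded balls, but $N_i$ is a connected Lie group while $D_i$ is discrete, so no such map exists. What you need instead is to compare the lattice $G_i/F_i$, with the metric it inherits from a suitably normalized left-invariant metric on $N_i$, to the orbit $G_i\cdot p_i\subset D_i$. For this to make sense you must first check that $F_i$ acts on $D_i$ with displacement tending to zero (so that passing to $G_i/F_i$ does not distort bounded-scale distances), and that the word metric on $G_i$ with respect to $S_i$ is comparable to the orbit metric --- both of which are among the technical points treated in \cite{zamora-lahs}. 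With that in hand, your limiting argument showing the stabilizer in $N$ is trivial (simply connected nilpotent groups have no nontrivial compact subgroups) and hence $\dim N=\ell$ is the right mechanism, and the abelian consequence follows as you say.
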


For $k \in \mathbb{N}$, a proper metric space $X$, we say that $x \in X$ is a $k$\textit{-regular point} if for any sequence $\lambda_i \to \infty$, the sequence $(\lambda_i X, x )$ converges in the pointed Gromov--Hausdorff sense to $\mathbb{R}^k$. For limits of sequences in $\mathfrak{M}_{Ric}(n,c)$, almost all points are regular \cite{cheeger-colding}.

\begin{thm}\label{CC}
\rm  (Cheeger--Colding) Let $ X_ i \in \mathfrak{M}_{Ric}(n,c) $ converge in the pointed Gromov--Hausdorff sense to a space $X$. If $\mathcal{R}_k$ denotes the set of $k$-regular points of $X$, then $\mathcal{R}_k \neq \emptyset$ implies $k \leq n$, and $\cup_{j=0}^n \mathcal{R}_k$ is dense in $X$.
\end{thm}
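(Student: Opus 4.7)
The plan is to pass to tangent cones and use two key tools from the Cheeger--Colding program: (i) Bishop--Gromov volume comparison, which bounds the dimension of any tangent cone from above by $n$, and (ii) a measure-theoretic/almost-rigidity argument showing that almost every point has a Euclidean tangent cone. The starting point is to fix a subsequence so that the renormalized measures $\underline{\mu}_i := \mathrm{vol}/\mathrm{vol}(B_1(p_i))$ converge weakly to a Radon measure $\underline{\mu}$ on $X$; Bishop--Gromov guarantees uniform doubling, so the limit measure exists and is nontrivial.

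For the bound $k \le n$, I would argue at the level of the ball packing/volume growth. By Bishop--Gromov, for every $x_i \in X_i$ and every $0 < r < R$, $\mathrm{vol}(B_r(x_i))/\mathrm{vol}(B_R(x_i)) \ge v_c^n(r)/v_c^n(R)$, where $v_c^n$ is the model ball volume. Passing through the rescalings $\lambda_i X_i$ (which have Ricci $\ge c/\lambda_i^2 \to 0$) and normalizing, this inequality survives in the limit as a Euclidean volume comparison on any tangent cone $T_x X$ of a limit point. If $T_x X \cong \mathbb{R}^k$, then its renormalized measure is a constant multiple of $k$-dimensional Lebesgue measure, whose volume growth rate is $r^k$; the surviving comparison forces $r^k \lesssim r^n$ as $r \to 0$, hence $k \le n$.

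For density of $\bigcup_k \mathcal{R}_k$, the idea is to show that $\underline{\mu}$-a.e. point is regular and then invoke the fact that $\underline{\mu}$ has full support (doubling plus nontriviality). The key ingredient is the Cheeger--Colding almost splitting theorem: if a ball $B_R$ in a manifold with $\mathrm{Ric} \ge -(n-1)\delta$ contains two points $p^\pm$ with $d(p^+,p^-) + 2\varepsilon \ge d(p^+,z) + d(z,p^-)$ for some nearby $z$, then $B_{R/2}(z)$ is Gromov--Hausdorff close to a product $\mathbb{R} \times Y$. I would use this iteratively on tangent cones: at $\underline{\mu}$-a.e. $x$, the volume-cone/volume-annulus conditions hold (a consequence of the Bishop--Gromov rigidity statement and Lebesgue differentiation applied to the renormalized measure), which forces every tangent cone to be a metric cone $C(Z)$ splitting isometrically as $\mathbb{R} \times C(Z')$ whenever it contains a line. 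Iterating the splitting on iterated tangent cones (each step bounded by the dimension bound $k \le n$ above) produces, after finitely many steps, a Euclidean factor of maximal dimension, and any remaining non-Euclidean factor must be trivial by dimension counting. This yields $x \in \mathcal{R}_k$ for some $k \le n$.

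The main obstacle is the almost splitting theorem and the related volume-rigidity statements; these require substantial analysis (segment inequality, Abresch--Gromoll-type excess estimates, and harmonic approximations of Busemann functions) that are the technical heart of \cite{cheeger-colding}. A secondary but still delicate point is showing that tangent cones at $\underline{\mu}$-a.e. point exist as genuine cones, which demands controlling the annulus-to-ball volume ratio in the limit and matching it to the cone rigidity in the almost-rigidity regime. Once these tools are granted, the iteration and the dimension bound combine cleanly to yield both conclusions.
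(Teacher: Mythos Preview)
The paper does not prove this theorem; it is quoted as a known result of Cheeger and Colding with a citation to \cite{cheeger-colding}, so there is no argument in the paper to compare your proposal against. Your outline is in fact a reasonable high-level summary of the original Cheeger--Colding strategy (renormalized limit measure via Bishop--Gromov doubling, volume-cone-implies-metric-cone almost rigidity, iterated splitting, and full support of the limit measure), and you correctly flag the almost-rigidity theorems as the nontrivial analytic input.

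Two small technical comments on your sketch. First, the inequality you wrote for the dimension bound is backwards: Bishop--Gromov yields the \emph{lower} bound $\underline{\mu}(B_r)/\underline{\mu}(B_R) \ge (r/R)^n$ on the tangent cone, so the comparison reads $r^k \gtrsim r^n$ as $r \to 0$, which indeed forces $k \le n$; alternatively (and more robustly, since the limit measure on the tangent cone is not \emph{a priori} $k$-dimensional Lebesgue), one can use the packing form of Bishop--Gromov: an $R$-ball in $\mathbb{R}^k$ contains roughly $(R/r)^k$ disjoint $r$-balls, while the packing bound inherited from the approximating manifolds caps this by $C(R/r)^n$. Second, the step ``any remaining non-Euclidean factor must be trivial by dimension counting'' hides a genuine point: after splitting off the maximal $\mathbb{R}^k$, the cross-section of the residual cone factor must have diameter $\le \pi$ and no lines, and one uses the volume/measure rigidity again (not just a dimension count) to conclude it is a point at $\underline{\mu}$-a.e.\ $x$.
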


Arguably the most used tool in the theory of Riemannian manifolds of non-negative Ricci curvature is the Cheeger--Gromoll splitting theorem. It was later generalized by Cheeger and Colding to limits of Riemannian manifolds \cite{cheeger-colding-splitting}. Using this, one could understand how $\mathbb{R}^{k}$ arises as a quotient of such spaces. 

\begin{thm} \label{cc-split}
\rm (Cheeger--Colding) Let $\varepsilon_i \to 0$ and $(Y_i, q_i) \in \mathfrak{M}_{Ric}(n,- \varepsilon_i) $ a sequence that converges in the pointed Gromov--Hausdorff sense to $(Y,q)$. If $Y$ contains an isometric copy of $\mathbb{R}^k$, then $Y$ split as a metric space as $\mathbb{R}^k \times Z$ for some proper geodesic space $Z$ of Hausdorff dimension $\leq n-k$.
\end{thm}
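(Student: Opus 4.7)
I would argue by induction on $k$, reducing to the central case $k=1$: if a sequence $(Y_i, q_i) \in \mathfrak{M}_{Ric}(n,-\varepsilon_i)$ with $\varepsilon_i \to 0$ converges to $(Y,q)$ and $Y$ contains a line, then $Y$ splits off an $\mathbb{R}$-factor. The hypothesis $\mathbb{R} \hookrightarrow Y$ isometrically already produces such a line $\gamma$. I would pick points $p^\pm = \gamma(\pm R)$ for large $R$ and pull them back through the Gromov--Hausdorff approximations $h_i : (Y,q) \to (Y_i, q_i)$ to obtain $p_i^\pm \in Y_i$. Between $p_i^-$ and $p_i^+$ there is a minimizing geodesic $\gamma_i$, and these converge along a subsequence to arbitrarily long sub-segments of $\gamma$.

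The analytic heart is to upgrade this almost-line into a quantitative almost-splitting on the smooth approximators. Combining the Abresch--Gromoll excess estimate with the Laplacian comparison $\Delta d_{p_i^\pm} \leq (n-1)/d_i + O(\sqrt{\varepsilon_i})$ coming from $\mathrm{Ric} \geq -\varepsilon_i$, the renormalized Busemann functions $b_i^\pm(x) := d(x, p_i^\pm) - R$ satisfy $b_i^+ + b_i^- \to 0$ uniformly on bounded sets and $\Delta b_i^+ \to 0$ in $L^1_{\mathrm{loc}}$. Integrating the Bochner identity $\tfrac{1}{2}\Delta |\nabla b_i^+|^2 = |\mathrm{Hess}\, b_i^+|^2 + \langle \nabla \Delta b_i^+, \nabla b_i^+\rangle + \mathrm{Ric}(\nabla b_i^+, \nabla b_i^+)$ against a suitable cutoff yields $\|\mathrm{Hess}\, b_i^+\|_{L^2(B_R(q_i))} \to 0$, so the gradient flow of $b_i^+$ defines an almost-isometric $\mathbb{R}$-action on $(Y_i, q_i)$.

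By an Arzel\'a--Ascoli-type argument akin to Theorem \ref{equivariant-compactness} these approximate flows subconverge to an isometric $\mathbb{R}$-action on $Y$ whose orbit through $q$ is precisely $\gamma$; Lemma \ref{equivariant} then identifies $Y$ as $\mathbb{R}\times Z$ metrically. For the inductive step, the remaining $k-1$ coordinates of $\mathbb{R}^k \subseteq Y = \mathbb{R}\times Z$ embed isometrically into $Z$; one realizes $Z$ itself as a pointed Gromov--Hausdorff limit of almost-nonnegative Ricci manifolds (e.g.\ as the limit of transverse slices in $Y_i$ to the almost-line, or by quotienting $Y_i$ by the almost-action) and reapplies the $k=1$ case. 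The Hausdorff dimension bound $\dim_H Z \leq n-k$ then follows from additivity of Hausdorff dimension under metric products together with the Cheeger--Colding estimate $\dim_H Y \leq n$. The hardest step is the Bochner-based Hessian estimate, which must be carried out uniformly on the smooth sequence and then pushed through to a possibly singular limit; a secondary delicate point is verifying, at the inductive step, that $Z$ really is an almost-nonnegative Ricci limit in its own right.
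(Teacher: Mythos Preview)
The paper does not prove Theorem~\ref{cc-split}: it is quoted in the preliminaries as a result of Cheeger and Colding, with citation \cite{cheeger-colding-splitting} and no argument given. There is therefore no proof in the paper to compare your proposal against.

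For what it is worth, your outline is the standard Cheeger--Colding strategy and is correct in its broad strokes. Two minor remarks. First, Cheeger and Colding do not literally pass through an almost-$\mathbb{R}$-action via the gradient flow of $b_i^+$; from the $L^2$-Hessian bound they instead extract an ``almost Pythagorean theorem'' on balls of $Y_i$, which yields the metric product structure directly in the limit. Your gradient-flow phrasing is morally equivalent, but making it rigorous (the flow is undefined on the cut locus, and proving it is an almost-isometry is essentially the same estimate repackaged) is no easier. Second, for the inductive step the cleanest route is to note that once $Y=\mathbb{R}\times Z$ the factor $Z$ is itself a pointed limit of the same sequence $(Y_i,q_i')$ with basepoints slid far along the almost-line (or, in the modern formulation, that Ricci limit spaces are closed under taking metric factors), so the $k=1$ case applies to $Z$ directly; the ``transverse slice'' and ``quotient by almost-action'' alternatives you mention are harder to make precise. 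You correctly flag both of these as the delicate points.
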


\begin{cor}\label{splitting}
\rm  Let $\varepsilon_i \to 0$ and $(Y_i, q_i) \in \mathfrak{M}_{Ric}(n,- \varepsilon_i) $ be a sequence that converges in the pointed Gromov--Hausdorff sense to $(Y,q)$. Assume there is a sequence of groups of isometries $\Gamma_i \leq Iso (Y_i)$ such that $(Y_i/\Gamma_i , [q_i])$ converges in the pointed Gromov--Hausdorff sense to $\mathbb{R}^{k}$ and $\Gamma_i$ converges in the equivariant Gromov--Hausdorff sense to a group $\Gamma \leq Iso (Y)$. Then $Y$ splits as a metric space as $\mathbb{R}^{k}\times Z$ for some proper geodesic space $Z$ of Hausdorff dimension $\leq n- k$, and the $Z$-fibers given by this product coincide with the orbits of $\Gamma$.
\end{cor}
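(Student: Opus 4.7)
The plan is to first use Lemma \ref{equivariant} to identify $(Y/\Gamma,[q])$ with $(\mathbb{R}^k,0)$, so that the quotient map $\pi:Y\to\mathbb{R}^k$ becomes a submetry (since $\Gamma$ acts by isometries, $\pi(B(y,r))=B(\pi(y),r)$ for every $y\in Y$ and $r>0$). I will then exhibit an isometric embedding $\mathbb{R}^k\hookrightarrow Y$ passing through $q$, so that Theorem \ref{cc-split} directly delivers the splitting $Y=\mathbb{R}^k\times Z$; the product form of $\pi$ inherited from the construction will force the orbits of $\Gamma$ to be exactly the $Z$-fibers.

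I build the isometric $\mathbb{R}^k$ by induction on $m=0,1,\dots,k$, producing isometric embeddings $\sigma_m:\mathbb{R}^m\to Y$ with $\sigma_m(0)=q$ and $\pi\circ\sigma_m$ equal to the inclusion of $\mathbb{R}^m$ as the first $m$ coordinates of $\mathbb{R}^k$. At the inductive step, Theorem \ref{cc-split} applied to $Y$ using the isometric $\mathbb{R}^m$ supplied by $\sigma_m$ gives $Y=\mathbb{R}^m\times Y_m$ with $\sigma_m(\mathbb{R}^m)=\mathbb{R}^m\times\{q_m\}$ and $q=(0,q_m)$.

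The technical heart of the argument is the claim that $\pi$ factors through this product: there is a submetry $\bar{\pi}_m:Y_m\to\mathbb{R}^{k-m}$ with $\pi(v,z)=(v,\bar{\pi}_m(z))$. Writing $\pi(v,z)=(a(v,z),b(v,z))$, the 1-Lipschitz property of $\pi$ applied to the pair $((v,z),(v',q_m))$ yields
\[
|a(v,z)-v'|^2+|b(v,z)|^2\leq|v-v'|^2+d_{Y_m}(z,q_m)^2
\]
for every $v'\in\mathbb{R}^m$; expanding shows that $-2v'\cdot(a(v,z)-v)$ must stay bounded above as $v'$ varies, which forces $a(v,z)=v$. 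Applying the 1-Lipschitz inequality to $((v_1,z),(v_2,z))$ similarly forces $b$ to be independent of the first coordinate, giving $b(v,z)=:\bar{\pi}_m(z)$; a short verification shows $\bar{\pi}_m$ inherits the submetry property from $\pi$. Using this submetry, I lift the coordinate line $\mathbb{R}e_{m+1}\subset\mathbb{R}^{k-m}$ through $0$ to a line $\tilde{L}$ in $Y_m$ through $q_m$ by the standard procedure (lift finite segments using the submetry property, extract a ray on each side via Arzel\`a--Ascoli, and concatenate using the 1-Lipschitz bound to see the concatenation is a line). Setting $\sigma_{m+1}(v,t):=(v,\tilde{L}(t))$ produces an isometric embedding of $\mathbb{R}^{m+1}$ into $Y$ satisfying the inductive hypothesis.

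After $k$ iterations, one final application of Theorem \ref{cc-split} yields $Y=\mathbb{R}^k\times Z$ with $\dim_H Z\leq n-k$, and the $m=k$ case of the product-form claim reduces to $\pi(v,z)=v$; each $Z$-fiber $\{v\}\times Z$ therefore equals $\pi^{-1}(v)$, which is a single $\Gamma$-orbit since $\pi$ is the quotient map. I expect the main obstacle to be cleanly establishing the product-form claim at each step and verifying that Theorem \ref{cc-split} places the isometric $\mathbb{R}^m$ supplied by $\sigma_m$ as the coordinate slice $\mathbb{R}^m\times\{q_m\}$ rather than as a twisted subspace; everything else is routine once this is in place.
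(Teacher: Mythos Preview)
Your argument is correct and follows the same route as the paper: use Lemma \ref{equivariant} to obtain a submetry $Y\to Y/\Gamma=\mathbb{R}^k$, lift Euclidean lines through it, apply Theorem \ref{cc-split} to split off an $\mathbb{R}^k$-factor, and then run a ``send the Euclidean coordinate to infinity'' inequality to conclude that the quotient map is projection onto that factor (hence $\Gamma$-orbits are $Z$-fibers). The only difference is organizational: the paper performs each step once---lifting $k$ lines, applying the splitting theorem a single time to get $Y=\mathbb{R}^k\times Z$ with $\phi(z_0,x)=x$, and then using one inequality (with $t\to\infty$) to show $g(z_0,x)\in\{x\}\times Z$ for $g\in\Gamma$---whereas you interleave these steps inductively one dimension at a time. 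Your version is more explicit about why the submetry respects the product decomposition at each stage, while the paper's one-shot presentation is considerably shorter.
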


\begin{proof}
One can use the submetry $\phi: Y \to  Y/\Gamma  = \mathbb{R}^k$ to lift the lines of $\mathbb{R}^k$ to lines in $Y$ passing through $q$. By Theorem \ref{cc-split},  we get the desired splitting $Y = \mathbb{R}^k \times Z$ with $\phi (z_0, x)=x $ for all $x \in \mathbb{R}^k$ and some $z_0 \in Z$. 

Let $g \in \Gamma$ and assume $g(z_0,x ) = (z,y)$ for some $z_0,z \in Z$, $x,y \in \mathbb{R}^k$. Then for all $t \geq 1$, one has
\begin{eqnarray*}
t \vert y-x \vert & = & \vert \phi (z_0, x+ t(y-x)  - \phi ( (z_0,x)) \vert \\
& = & \vert  \phi  (z_0, x+ t(y-x))- \phi (z,y)  \vert        \\
& \leq & d^Y (  (z_0, x + t (y-x)) , (z,y) )\\
& = & \sqrt{  d^Z(z_0,z)^2 +  \vert  (t-1) (y-x) \vert ^2 }.
\end{eqnarray*} 
As $t \to \infty$, this is only possible if $x=y$, and we conclude that the action of $\Gamma$ respects the splitting $Y = \mathbb{R}^k \times Z$.
\end{proof}

\subsection{Homology and Ricci curvature bounds}

We define the \textit{content} of a map $A \to X$ between topological spaces to be the image of the natural map $H_1(A) \to H_1(X)$. If $\mathcal{U}$ is a family of subsets of $X$, we  denote by $H_1(\mathcal{U} \prec X ) \leq H_1(X)$ the subgroup generated by the contents of the inclusions $U \to X$ with $U \in \mathcal{U}$. This group satisfies a natural monotonicity property.

\begin{lem}\label{refinement}
\rm Let $X$ be a topological space, and $\mathcal{U}$, $\mathcal{V}$ two families of subsets of $X$. If for each $U \in \mathcal{U}$ there is $V \in \mathcal{V}$ with $U \subset V$, then $H_1(\mathcal{U}\prec X) \leq H_1(\mathcal{V} \prec X)$.
\end{lem}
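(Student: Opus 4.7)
The plan is to unwind the definitions and use functoriality of $H_1$. Recall that $H_1(\mathcal{U}\prec X)$ is the subgroup of $H_1(X)$ generated by the images of the maps $\iota_*^U : H_1(U) \to H_1(X)$ induced by the inclusions $\iota^U : U \hookrightarrow X$, and similarly for $\mathcal{V}$. Thus it suffices to verify that every generator of $H_1(\mathcal{U}\prec X)$ lies in $H_1(\mathcal{V}\prec X)$.

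Fix $U \in \mathcal{U}$ and, by hypothesis, choose $V \in \mathcal{V}$ with $U \subset V$. Let $j : U \hookrightarrow V$ and $\iota^V : V \hookrightarrow X$ be the inclusions, so that $\iota^U = \iota^V \circ j$. Functoriality of $H_1$ gives $\iota^U_* = \iota^V_* \circ j_*$, hence
\[
\mathrm{Im}(\iota^U_*) = \iota^V_*(j_*(H_1(U))) \subset \iota^V_*(H_1(V)) = \mathrm{Im}(\iota^V_*).
\]
Since $\mathrm{Im}(\iota^V_*)$ is by definition a subgroup of $H_1(\mathcal{V}\prec X)$, every content of an inclusion $U \to X$ with $U \in \mathcal{U}$ is contained in $H_1(\mathcal{V}\prec X)$.

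As these contents generate $H_1(\mathcal{U}\prec X)$, we conclude $H_1(\mathcal{U}\prec X) \leq H_1(\mathcal{V}\prec X)$. There is no substantive obstacle here; the statement is a direct consequence of the functoriality of $H_1$ and the fact that the relevant groups are defined as subgroups generated by images under inclusions. The only thing to be careful about is to use the chosen $V$ uniformly for each $U$ (which the hypothesis supplies pointwise, and that is all that is needed since we are only comparing generating sets).
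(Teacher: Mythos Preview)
Your proof is correct. The paper does not actually include a proof of this lemma; it is stated as an immediate monotonicity property following the definition of $H_1(\mathcal{U}\prec X)$, and your argument via functoriality of $H_1$ under inclusions is exactly the intended (and essentially only) justification.
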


If $\varepsilon > 0 $, $X$ is a metric space, and $\mathcal{U}$ is the family of balls of radius $\varepsilon$ in $X$, then we denote $H_1(\mathcal{U}\prec X)$ simply by $H_1^{\varepsilon}( X)$. It has been recently shown that limits of sequences in $\mathfrak{M}_{Ric}(n,c,D)$ are semi-locally-simply-connected \cite{wang}.

\begin{thm}\label{slsc}
\rm (Pan--Wang) Let $X_i \in \mathfrak{M}_{Ric}(n,c,D)$ converge in the Gromov--Hausdorff sense to a space $X$. Then $X$ is semi-locally-simply-connected. In particular, $H^{\varepsilon}_1(X)$ is trivial for small enough $\varepsilon$.
\end{thm}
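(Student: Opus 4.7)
I would argue by contradiction. Suppose $X$ fails to be semi-locally simply connected at some point $x \in X$. Then there exist loops $\sigma_k$ based at $x$ with $\mathrm{diam}(\sigma_k) \to 0$ that are not null-homotopic in $X$. Using pointed GH-approximations $h_i : (X,x) \to (X_i, x_i)$, approximate each $\sigma_k$, for all sufficiently large $i$, by a piecewise geodesic loop $\sigma_{k,i} \subset X_i$ of comparable diameter based at $x_i$.

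Let $\widetilde{X}_i \to X_i$ denote the universal cover with deck group $\Gamma_i = \pi_1(X_i)$ and fix a lift $\widetilde{x}_i$ of $x_i$. Since $\widetilde{X}_i$ also has Ricci curvature $\geq c$, Theorem \ref{compactness} yields (after passing to a subsequence) pointed GH convergence $(\widetilde{X}_i, \widetilde{x}_i) \to (\widetilde{X}, \widetilde{x})$, and Theorem \ref{equivariant-compactness} produces an equivariant limit $\Gamma_i \to G \leq Iso(\widetilde{X})$ with $\widetilde{X}/G = X$ by Lemma \ref{equivariant}. Each $\sigma_{k,i}$ determines an element $g_{k,i} \in \Gamma_i$ whose displacement at $\widetilde{x}_i$ tends to $0$ as $k \to \infty$. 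A delicate point is that $g_{k,i}$ can be chosen non-trivial for every sufficiently large $i$: if $\sigma_{k,i}$ were null-homotopic in $X_i$, the filling disk (of diameter at most $D$) would, via $f_i : X_i \to X$ and a diagonal extraction over $i$, produce an actual null-homotopy of $\sigma_k$ in $X$, contradicting the choice of $\sigma_k$.

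Equivariant convergence then supplies non-trivial elements $\gamma_k \in G$ with $d^{\widetilde{X}}(\gamma_k \widetilde{x}, \widetilde{x}) \to 0$. Applying the nilpotent structure Theorem \ref{zamora} to the subgroups of $\Gamma_i$ generated by these short elements forces $G$ to contain a non-trivial connected Lie subgroup, and in particular a one-parameter subgroup $\mathbb{R} \leq G$. By Corollary \ref{splitting}, $\widetilde{X}$ splits metrically as $\mathbb{R} \times Z$ with the $\mathbb{R}$-factor being the orbit of this one-parameter subgroup through $\widetilde{x}$, and $G$ respects the splitting. Since $\mathbb{R} \leq G$, this orbit collapses to a single point under the quotient map $\widetilde{X} \to X$. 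The remaining step, which I expect to be the main obstacle, is to shadow this continuous deformation in $\widetilde{X}$ back through the GH-approximations to an actual null-homotopy of $\sigma_k$ inside a small neighborhood of $x$ in $X$, thereby contradicting the non-triviality of $\sigma_k$. Turning the algebraic structure of the limit group into concrete, finite-stage fillings requires careful use of the bounded-diameter hypothesis and of the continuity of the $\mathbb{R}$-action on $\widetilde{X}$.
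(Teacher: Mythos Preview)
The paper does not prove this theorem; it is quoted from \cite{wang} and used as a black box. So there is no ``paper's own proof'' to match, and your proposal must stand on its own. It does not.

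The central gap is the passage from the non-trivial deck transformations $g_{k,i}\in\Gamma_i$ to non-trivial limits $\gamma_k\in G$. Equivariant Gromov--Hausdorff convergence gives you, for each $k$, an element $\gamma_k\in G$ approximated by the $g_{k,i}$, but nothing prevents $\gamma_k$ from being the identity: whenever discrete groups $\Gamma_i$ limit to a non-discrete $G$, plenty of non-trivial $g_i\in\Gamma_i$ converge to $1\in G$. Your only argument for $g_{k,i}\neq 1$ is the claim that a filling disk in $X_i$ could be pushed through $f_i$ to a null-homotopy of $\sigma_k$ in $X$; but transferring a $2$-disk across a Gromov--Hausdorff approximation requires precisely the kind of local topological control on $X$ that you are trying to establish, so this step is circular. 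Even granting it, it would show $g_{k,i}\neq 1$, not $\gamma_k\neq 1$.

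There are also two misapplications of results from the paper. Corollary \ref{splitting} assumes the quotients $Y_i/\Gamma_i$ converge to $\mathbb{R}^k$; in your setup they converge to $X$, so you cannot invoke it to split off an $\mathbb{R}$-factor from a one-parameter subgroup of $G$. Theorem \ref{zamora} concerns transitive actions on discrete metric spaces converging to a geodesic space of known topological dimension; you have neither identified such spaces nor such a dimension, so the conclusion that $G$ contains a non-trivial connected Lie subgroup does not follow. Finally, you yourself flag the last step---turning a continuous motion in $\widetilde X$ into an honest small null-homotopy in $X$---as ``the main obstacle''; it is the same homotopy-transfer problem as above, and it is exactly where the real work in \cite{wang} lies.
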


\begin{thm}\label{SW}
\rm (Sormani--Wei) Let $X$ be a compact geodesic space. Assume there is $\varepsilon > 0 $ such that $H_1^{2\varepsilon}( X)$ is trivial, and let $Y$ be a compact geodesic space with $f:Y \to X$ an $\varepsilon /100 $-approximation. Then there is a surjective morphism $ H_1 (Y) \to H_1 (X)$ (independent of $\varepsilon$) whose kernel is precisely $H_1^{\varepsilon}(Y)$.
\end{thm}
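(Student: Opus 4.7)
The plan is to build the map $\Phi \colon H_1(Y) \to H_1(X)$ by polygonal discretization. Given a continuous loop $\alpha \colon S^1 \to Y$, uniform continuity supplies a cyclic partition $0 = t_0 < \cdots < t_n = 1$ fine enough that each arc $\alpha([t_j, t_{j+1}])$ has diameter below $\varepsilon/100$; setting $x_j = f(\alpha(t_j))$, the approximation property gives $d^X(x_j, x_{j+1}) < \varepsilon/50$. Joining consecutive $x_j$'s by minimizing geodesics in $X$ produces a polygonal loop $\bar\alpha$, and I declare $\Phi[\alpha] = [\bar\alpha]$.

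Well-definedness and surjectivity proceed by the same template. Any two valid partitions share a common refinement, and inserting one extra vertex replaces a single geodesic edge of $\bar\alpha$ by the other two sides of a geodesic triangle of perimeter under $3\varepsilon/50$; such a triangle lies in a $2\varepsilon$-ball of $X$ and bounds by the hypothesis $H_1^{2\varepsilon}(X) = 0$. Two homologous loops in $Y$ are the boundary of a 2-chain $D$, which subdivides into small pieces whose $f$-images can be geodesically coned in $X$ to yield a bounding 2-chain for $\bar\alpha_1 - \bar\alpha_2$ up to triangles in $2\varepsilon$-balls, again killed by the hypothesis; the same common-refinement argument gives independence from the particular valid $\varepsilon$. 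For surjectivity, any loop $\beta \subset X$ is homologous to a fine polygonal loop whose vertices can, by $\varepsilon/100$-density of $f(Y)$, be chosen in $f(Y)$; lifting those vertices to $Y$ and joining them by geodesics produces $\tilde\beta$ with $\Phi[\tilde\beta] = [\beta]$.

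The inclusion $H_1^\varepsilon(Y) \subseteq \ker \Phi$ is immediate: a loop in $B^Y(y, \varepsilon)$ discretizes to a polygonal loop in $B^X(fy, 2\varepsilon)$, which bounds in $X$. For the reverse inclusion, suppose $\Phi[\alpha] = 0$, so $\bar\alpha = \partial c$ for a singular 2-chain $c = \sum n_i \sigma_i$ in $X$. Barycentrically subdivide each $\sigma_i$ at a scale well below $\varepsilon/100$; use $\varepsilon/100$-density of $f(Y)$ to fix a near-inverse $\psi$ of $f$ on the vertex set of the subdivision, taking $\psi$ to agree with $\alpha$ on the vertices inherited from $\bar\alpha$; then replace every sub-2-simplex by the singular simplex in $Y$ whose vertices are lifted by $\psi$, whose edges are minimizing geodesics between those lifts, and whose interior is filled by a geodesic cone. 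Interior edges cancel in the resulting 2-chain $c'$, so $\partial c'$ is a polygonal loop in $Y$ tracing the subdivided $\bar\alpha$; it coincides with $\alpha$ at each $y_j = \alpha(t_j)$, and the arc of $\alpha$ from $y_j$ to $y_{j+1}$ together with the corresponding piece of $\partial c'$ bounds a digon confined to a single $\varepsilon$-ball of $Y$. Hence $[\alpha] = [\alpha - \partial c'] \in H_1^\varepsilon(Y)$.

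The principal obstacle is this boundary-control claim. The geodesic coning in $Y$ is noncanonical and every application of $\psi$ introduces $\varepsilon/100$-slack that can compound along lifted chains, so one must fix consistent geodesic choices (so that shared edges of adjacent sub-simplices genuinely cancel) and must take the subdivision scale small enough that the residual digons really do fit inside $\varepsilon$-balls of $Y$. This is precisely where the factor-$100$ gap between the approximation scale $\varepsilon/100$ and the cutoff $\varepsilon$ defining $H_1^\varepsilon(Y)$ is consumed.
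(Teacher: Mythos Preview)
Your approach matches the paper's: build $\Phi$ by polygonal discretization via $f$, verify well-definedness and the homomorphism property with barycentric subdivision together with the hypothesis $H_1^{2\varepsilon}(X)=0$, and handle surjectivity and the inclusion $H_1^\varepsilon(Y)\subseteq\ker\Phi$ exactly as you describe.

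There is one technical slip in the reverse inclusion $\ker\Phi\subseteq H_1^\varepsilon(Y)$. You fill each lifted sub-$2$-simplex in $Y$ ``by a geodesic cone,'' but in an arbitrary compact geodesic space minimizing geodesics need not vary continuously with their endpoints, so such a cone need not define a continuous map $\Delta^2\to Y$; and since $Y$ is not assumed semi-locally simply connected, the small lifted triangles need not bound singular disks in $Y$ at all. Hence $c'$ may fail to be a valid singular $2$-chain, and you cannot conclude $[\partial c']=0$. Your list of obstacles flags consistency of edge choices and scale control, but not this continuity/fillability issue.

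The paper sidesteps this by lifting only the $1$-skeleton of the subdivided chain to $Y$. One then has the identity of $1$-cycles
\[
c \;=\; \bigl(c-\widetilde{\phi(c)}\bigr)\;+\;\sum_i n_i\,\widetilde{\partial\tau_i},
\]
where the first summand is a sum of the digons you already describe and each $\widetilde{\partial\tau_i}$ is a small lifted triangle; every term on the right is a $1$-cycle lying in a single $\varepsilon$-ball of $Y$, so $[c]\in H_1^\varepsilon(Y)$ with no filling in $Y$ whatsoever. Dropping the cone and arguing this way closes the gap.
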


\begin{proof}[Proof sketch:]
We follow the lines of (\cite{SW}, Theorem 2.1), where they prove this result for $\pi_1$ instead of $H_1$. Each 1-cycle in $Y$ can be thought as a family of loops $\mathbb{S}^1 \to Y$ with integer multiplicity. For each map $ \gamma : \mathbb{S}^1 \to Y$, by uniform continuity one could pick finitely many cyclically ordered points $\{ z_1, \ldots , z_m \} \subset \mathbb{S}^1$ such that $\gamma ([z_{j-1}, z_j])$ is contained in a ball of radius $\varepsilon /10$ for each $j$. Then set $\phi ( \gamma ) : \mathbb{S}^1 \to X $ to be the loop with $\phi (\gamma )(z_j) = f( \gamma (z_j) )$ for each $j$, and $\phi (\gamma ) \vert _{[z_{j-1}, z_j]}$ a minimizing geodesic from $\phi (\gamma )(z_{j-1})$ to $\phi (\gamma )(z_{j})$. 

Clearly, $\phi (\gamma )$ depends on the choice of the points  $z_j$ and the minimizing paths $\phi (\gamma ) \vert _{[z_{j-1}, z_j]}$. However, the homology class of $\phi (\gamma )$ in $H_1(X)$ does not depend on these choices, since different choices yield curves that are $\varepsilon$-uniformly close, which by hypothesis are homologous. 

Assume that a 1-cycle $c$ in $Y$ is the boundary $\partial \sigma$ of a 2-chain $\sigma$. After taking iterated barycentric subdivision, one could assume that each simplex of $\sigma$ is contained in a ball of radius $\varepsilon /10$. By recreating $\sigma $ in $X$ via $f$ simplex by simplex, one could find a 2-chain whose boundary is $\phi (c)$. This means that $\phi$ induces a map $ \tilde{\phi}:H_1(Y) \to H_1(X)$.

In a similar fashion, if a 1-cycle $c$ in $Y$ is such that $\phi (c) $ is the boundary of a 2-chain $\sigma$, one could again apply iterated barycentric subdivision to obtain a 2-chain $\sigma^{\prime}$ in $X$ whose boundary is $\phi (c)$ and such that each simplex is contained in a ball of radius $\varepsilon /10$. Using $f$ one could recreate the 1-skeleton of $\sigma ^{\prime}$ in $Y$ in such a way that expresses $c$ as a linear combination with integer coefficients of 1-cycles contained in balls of radius $\varepsilon$ in $Y$. This implies that the kernel of $\tilde{\phi} $ is contained in $H_1^{\varepsilon}(Y)$.

If a 1-cycle $c$ in $Y$ is contained in a ball of radius $\varepsilon$, then $\phi (c)$ is contained in a ball of radius $2 \varepsilon$ and then by hypothesis, $\phi (c)$ is a boundary. This shows that the kernel of $\tilde{\phi}$ is precisely $H_1^{\varepsilon}(Y)$.

Lastly, for any loop $\gamma : \mathbb{S}^1 \to X$, one can create via $f$ a loop  $\gamma_1 : \mathbb{S}^1 \to Y$ such that $\phi ( \gamma_1)$ is uniformly close (and hence homologous) to $\gamma$, so $\tilde{\phi}$ is surjective. 
\end{proof}

\begin{cor}\label{SW-gap}
\rm Let $X$ be a compact geodesic space. Assume there is $\rho > 0 $ such that  $H_1^{2\rho}( X)$ is trivial, and consider a sequence $X_i$ of compact geodesic spaces that converges to $X$ in the Gromov--Hausdorff sense. Then there is a sequence $\rho _i \to 0$ such that $ H_1^{\rho_i}( X_i) = H_1^{\rho}( X_i)  $ for each $i$.
\end{cor}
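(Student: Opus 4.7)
The plan is to apply Theorem \ref{SW} twice to each $X_i$, with two different thresholds $\varepsilon$, and exploit the clause that the induced surjection $\tilde\phi : H_1(X_i) \to H_1(X)$ is independent of $\varepsilon$.

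Since $X_i \to X$ in the Gromov--Hausdorff sense, I can pick approximations $f_i : X_i \to X$ that are $\delta_i$-isometries with $\delta_i \to 0$. For all $i$ large enough that $\delta_i \leq \rho/100$, the map $f_i$ qualifies simultaneously as a $\rho/100$-approximation and as a $(100\delta_i)/100$-approximation. The hypothesis gives $H_1^{2\rho}(X) = 0$, and because each ball of radius $200\delta_i$ sits inside the ball of radius $2\rho$ with the same center, Lemma \ref{refinement} yields $H_1^{200\delta_i}(X) \leq H_1^{2\rho}(X) = 0$ as well. Thus Theorem \ref{SW} applies with $\varepsilon = \rho$, producing a surjection $\tilde\phi_i : H_1(X_i) \to H_1(X)$ whose kernel equals $H_1^{\rho}(X_i)$, and it also applies with $\varepsilon = 100\delta_i$, producing the \emph{same} surjection (by the independence clause) whose kernel is now described as $H_1^{100\delta_i}(X_i)$. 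Comparing the two descriptions of $\ker \tilde\phi_i$ gives $H_1^{100\delta_i}(X_i) = H_1^{\rho}(X_i)$.

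Setting $\rho_i := 100 \delta_i$ for large $i$ and $\rho_i := \rho$ for the finitely many remaining indices yields a sequence with $\rho_i \to 0$ and $H_1^{\rho_i}(X_i) = H_1^{\rho}(X_i)$ for every $i$. The only subtlety is verifying that the two applications of Theorem \ref{SW} yield the \emph{same} morphism $\tilde\phi_i$; this is precisely the content of the ``independent of $\varepsilon$'' parenthetical in that theorem, so there is no real obstacle and the argument amounts to bookkeeping on top of the Sormani--Wei theorem.
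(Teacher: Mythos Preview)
Your argument is correct and is essentially the same as the paper's: pick $\rho_i$ small enough that the approximation $f_i$ is a $\rho_i/100$-isometry, apply Theorem~\ref{SW} at both scales $\varepsilon=\rho_i$ and $\varepsilon=\rho$, and use the ``independent of $\varepsilon$'' clause to conclude the two kernels $H_1^{\rho_i}(X_i)$ and $H_1^{\rho}(X_i)$ coincide. The only difference is cosmetic---you spell out the verification that $H_1^{200\delta_i}(X)=0$ via Lemma~\ref{refinement}, which the paper leaves implicit.
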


\begin{proof}
For large enough $i$, let $\rho_i \in  ( 0 , \rho ] $ be such that $\rho_i \to 0$ and there is a $\rho_i/100$-approximation $X_i \to X $. One could then apply Theorem \ref{SW} for $\varepsilon \in [\rho_i , \rho]$ to get a map $H_1(X_i) \to H_1(X)$ whose kernel equals both  $ H_1^{\rho_i}(X_i) $ and $ H_1^{\rho}(X_i) $. For small $i$, simply set $\rho_i = \rho$.
\end{proof}

The following results were obtained in \cite{kapovitch-wilking}, and are stated in terms of $\pi_1$. The first one states that for $M \in \mathfrak{M}_{Ric}(n,c,D)$, there is a subgroup $N \leq H_1(M) $ that can be detected anywhere. The second one states that at regular points, there is a gap phenomenon.

\begin{thm}\label{KW}
\rm  (Kapovitch--Wilking) For each $n \in \mathbb{N}$, $c \in \mathbb{R}$, $D > 0 $, $\varepsilon_1 > 0$, there are $\varepsilon_0 > 0$, $C \in \mathbb{N}$, such that the following holds. For each $ M \in \mathfrak{M}_{Ric}(n,c,D) $, there is $\varepsilon \in [\varepsilon_0, \varepsilon_1]$ and a subgroup $N \leq H_1(M)$ such that for all $x \in M$, 
\begin{itemize}
\item $N$ lies in the content of the inclusion $ B^M(x, \varepsilon / 1000 )  \to M$.
\item The index of $N$ in the  content of the inclusion $ B^M(x, \varepsilon )  \to  M  $ is $\leq C$.
\end{itemize}
\end{thm}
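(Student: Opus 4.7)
The plan is to proceed by contradiction using a scaling and equivariant Gromov--Hausdorff convergence argument in the spirit of the proof of Theorem \ref{zamora}. Suppose the statement fails for some fixed $n, c, D, \varepsilon_1$, so there are sequences $\varepsilon_0^{(i)} \to 0$, $C_i \to \infty$ and manifolds $M_i \in \mathfrak{M}_{Ric}(n,c,D)$ for which no valid pair $(\varepsilon, N)$ exists in $[\varepsilon_0^{(i)}, \varepsilon_1]$. For each $M_i$ and scale $s > 0$ set
\[
A_i(x,s) := \mathrm{image}\bigl(H_1(B^{M_i}(x,s)) \to H_1(M_i)\bigr), \qquad N_i(s) := \bigcap_{x \in M_i} A_i(x,s),
\]
so that $N_i(s)$ is the maximal candidate subgroup lying in every content at radius $s$. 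The failure hypothesis then provides, for each $\varepsilon$ in the bad range, a point $y_i(\varepsilon) \in M_i$ with $[A_i(y_i(\varepsilon), \varepsilon) : N_i(\varepsilon/1000)] > C_i$.

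I would then pigeonhole over a geometric progression of scales within $[\varepsilon_0^{(i)}, \varepsilon_1]$, combined with a diagonal subsequence, to extract a single bad scale $\varepsilon_i \to 0$ for each $i$ at which the index above is still unbounded. Rescale and center: consider $(\varepsilon_i^{-1} M_i, y_i)$ and apply Gromov compactness (Theorem \ref{compactness}) to pass to a pointed limit $(Y,y)$, whose topological dimension is at most $n$ by Theorem \ref{CC}. Form the normal cover $\widehat{M}_i \to M_i$ whose deck group is the finitely generated abelian group $\Gamma_i := H_1(M_i)/N_i(\varepsilon_i/1000)$ (obtained from the abelian cover by quotienting out $N_i(\varepsilon_i/1000)$), lift base points, and apply equivariant compactness (Theorem \ref{equivariant-compactness}) to the rescaled pointed covers $(\varepsilon_i^{-1}\widehat{M}_i, \hat y_i)$, extracting an equivariant limit closed group $\Gamma \leq Iso(\widehat{Y})$ with $\widehat{Y}/\Gamma = Y$.

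Now the structural input kicks in. Because $N_i(\varepsilon_i/1000)$ already contains every homology class of a loop of length $\leq \varepsilon_i/1000$ based at \emph{any} point, the nontrivial elements of $\Gamma_i$ have displacement at least of order $1/1000$ on the rescaled cover, so independent representatives of cosets of $N_i(\varepsilon_i/1000)$ in $A_i(y_i, \varepsilon_i)$ survive to independent nontrivial isometries in $\Gamma$, while remaining uniformly bounded in displacement. Using Corollary \ref{splitting} with the subgroup of $\Gamma$ generated by these limiting elements, $\widehat{Y}$ splits as $\mathbb{R}^k \times Z$ with $k \leq n$, and the action becomes by translations on the $\mathbb{R}^k$-factor. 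Applying Theorem \ref{zamora} to the finitely generated abelian groups produced at the approximating level forces their rank to be bounded by the topological dimension of the orbit space, hence by $n$. This contradicts $C_i \to \infty$ and closes the argument.

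The main obstacle is precisely the verification that independent cosets in $A_i(y_i, \varepsilon_i)/N_i(\varepsilon_i/1000)$ remain independent and nontrivial in the equivariant limit; this requires choosing representatives as shortest loops in their homology class so that, after rescaling by $\varepsilon_i^{-1}$, their translation length on $\widehat{M}_i$ sits in a uniform interval bounded away from $0$ and $\infty$. A subtler point is that $N_i(s)$ is defined as an intersection over all $x \in M_i$ and one must verify (using Corollary \ref{SW-gap} together with the uniform doubling coming from the Ricci bound) that finitely many basepoints suffice at each scale, so that the covers $\widehat{M}_i$ and the filtration $\{N_i(\varepsilon_i/1000)\}$ behave well under equivariant limits. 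Once this is in hand, the rank bound from Theorem \ref{zamora} provides the concluding contradiction.
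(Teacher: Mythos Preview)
The paper does not prove Theorem~\ref{KW}; it is quoted from \cite{kapovitch-wilking} as a black box (the sentence immediately preceding the statement says ``The following results were obtained in \cite{kapovitch-wilking}''). So there is no proof in the paper to compare your proposal against.

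That said, your sketch has genuine gaps as a standalone argument. The most serious one is the mismatch between \emph{index} and \emph{rank}. The failure hypothesis gives you $[A_i(y_i,\varepsilon_i):N_i(\varepsilon_i/1000)]>C_i\to\infty$, but this quotient of abelian groups could be a single large cyclic factor with rank~$1$. Theorem~\ref{zamora} only bounds the \emph{rank} of the approximating abelian groups by the topological dimension of the limit orbit; it says nothing that would contradict arbitrarily large finite torsion. So the final step ``rank bound from Theorem~\ref{zamora} contradicts $C_i\to\infty$'' does not close.

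A second gap is the displacement lower bound. You assert that nontrivial elements of $\Gamma_i=H_1(M_i)/N_i(\varepsilon_i/1000)$ move $\hat y_i$ by at least order $1/1000$ after rescaling. But $N_i(\varepsilon_i/1000)=\bigcap_{x}A_i(x,\varepsilon_i/1000)$ is an intersection over \emph{all} basepoints, so a class can lie in $A_i(y_i,\varepsilon_i/1000)$ (hence have a short representative at $y_i$, hence small displacement at $\hat y_i$) while failing to lie in $N_i(\varepsilon_i/1000)$ because it is not in the content at some \emph{other} point. Such elements are nontrivial in $\Gamma_i$ yet have arbitrarily small displacement in the rescaled cover, so they do not survive as nontrivial isometries in the equivariant limit. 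The actual Kapovitch--Wilking argument handles exactly this issue with a delicate multi-scale induction and their generalized Margulis lemma; it is not a single rescale-and-limit step.
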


\begin{lem}\label{KW-gap}
\rm (Kapovitch--Wilking) Let $X_i \in \mathfrak{M}_{Ric}(n,c,D)$ converge in the Gromov--Hausdorff sense to a space $X$. Consider a $k$-regular point $x \in X$, and $h_i : X \to X_i$ a sequence of Gromov--Hausdorff approximations. Then there is $\eta > 0 $ and a sequence $\eta_i \to 0$ such that the contents of the inclusions $ B^{X_i}(h_i(x), \eta_i)  \to  X_i $, $ B^{X_i}(h_i(x), \eta)  \to  X_i $ coincide.
\end{lem}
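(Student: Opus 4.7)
My plan is to iterate Theorem \ref{KW} at the regular point $x$ and use the convergence to $\mathbb{R}^k$ to force the content near $h_i(x)$ to stabilize. Write $A_i(r) \leq H_1(X_i)$ for the content of the inclusion $B^{X_i}(h_i(x), r) \to X_i$, which is monotone nondecreasing in $r$.

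First, I would fix a small $\eta^{(0)} > 0$ and apply Theorem \ref{KW} to $\mathfrak{M}_{Ric}(n,c,D)$ with $\varepsilon_1 = \eta^{(0)}$, obtaining $\varepsilon_0^{(0)}, C^{(0)}$, and for each sufficiently large $i$ a scale $\varepsilon_i^{(0)} \in [\varepsilon_0^{(0)}, \eta^{(0)}]$ and a subgroup $N_i^{(0)} \leq H_1(X_i)$ with $N_i^{(0)} \leq A_i(\varepsilon_i^{(0)}/1000)$ and $[A_i(\varepsilon_i^{(0)}) : N_i^{(0)}] \leq C^{(0)}$. Iterating at decreasing scales $\eta^{(j+1)} := \varepsilon_0^{(j)}/1000 \to 0$ yields a descending chain $A_i(\varepsilon_i^{(0)}) \supseteq A_i(\varepsilon_i^{(1)}) \supseteq \dots$ with each successive quotient of size bounded by the scale-dependent constant $C^{(j)}$.

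Next, I would invoke the $k$-regularity of $x$: by a diagonal argument using the pointed Gromov--Hausdorff convergence $h_i$, one can choose $\lambda_i \to \infty$ so that $(\lambda_i X_i, h_i(x))$ converges in the pointed Gromov--Hausdorff sense to $(\mathbb{R}^k, 0)$. Any coset of $N_i^{(j)}$ inside $A_i(\varepsilon_i^{(j)})$ is represented by a loop in $B^{X_i}(h_i(x), \varepsilon_i^{(j)})$; rescaled by $1/\varepsilon_i^{(j)}$, this lives in a space approaching a ball in $\mathbb{R}^k$. Since $\mathbb{R}^k$ is simply connected, the loop becomes null in the limit, forcing the representative in $X_i$ to collapse into a ball of radius $o(\varepsilon_i^{(j)})$. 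Consequently each strict step of the chain eventually disappears, and the chain stabilizes at some finite depth $m$; setting $\eta := \eta^{(m)}$ and $\eta_i := \varepsilon_i^{(m+1)}$ then gives $A_i(\eta_i) = A_i(\eta)$ with $\eta_i \to 0$.

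The main obstacle is the third step. The indices $C^{(j)}$ produced by iterated KW do not by themselves prevent indefinite strict descent of the chain; what prevents it is the rigidity at the regular point. Making this rigorous requires converting the limit triviality of loops in $\mathbb{R}^k$ into a statement that representative loops in $X_i$ can be realized in balls of arbitrarily small radius around $h_i(x)$, presumably via a local Sormani--Wei style approximation argument combined with the iterated KW data.
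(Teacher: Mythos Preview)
The paper does not prove Lemma \ref{KW-gap}; it is quoted from \cite{kapovitch-wilking} without argument, so there is no in-paper proof to compare your proposal against. I will therefore assess the proposal on its own.

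There is a genuine gap, and it is more serious than the hedge in your last paragraph suggests. First, a structural point: for each fixed level $j$ your scales $\varepsilon_i^{(j)}$ lie in the fixed interval $[\varepsilon_0^{(j)},\eta^{(j)}]$, bounded away from $0$. Hence along any subsequence $\bigl(\tfrac{1}{\varepsilon_i^{(j)}}X_i,\,h_i(x)\bigr)$ converges to a rescaling of $(X,x)$, \emph{not} to $\mathbb{R}^k$; $k$-regularity only produces $\mathbb{R}^k$ when the blow-up factor tends to infinity. So the sentence ``rescaled by $1/\varepsilon_i^{(j)}$, this lives in a space approaching a ball in $\mathbb{R}^k$'' is false for fixed $j$. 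To see $\mathbb{R}^k$ you must let $j\to\infty$ jointly with $i$, but then there is no ``finite depth $m$'' at which to stop, and your proposed $\eta_i:=\varepsilon_i^{(m+1)}$ would in any case not tend to $0$.

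Second, and more fundamentally, Gromov--Hausdorff closeness to $\mathbb{R}^k$ gives no direct control on homology classes in $X_i$. A loop in $B^{X_i}(h_i(x),r)$ whose image under a GH-approximation is close to a constant in $\mathbb{R}^k$ need not be homologous in $X_i$ to a loop in a smaller ball: the ``discrete null-homotopy'' one builds from the Euclidean contraction consists of small triangles that must be filled in $X_i$, and nothing in your setup guarantees those fillings exist inside $B^{X_i}(h_i(x),r)$ (compare the hypothesis $H_1^{2\varepsilon}(X)=0$ that Theorem \ref{SW} requires precisely to make such transfers legitimate). The iterated bounds $C^{(j)}$ from Theorem \ref{KW} do not help here, as you observe. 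The actual Kapovitch--Wilking argument does not try to push loops into smaller balls; it passes to a covering space, blows up, and uses equivariant Gromov--Hausdorff convergence together with the structure of the limiting isometric action (the same machinery underlying Corollary \ref{splitting} and Theorem \ref{zamora} in this paper). Without invoking that machinery, I do not see how to close step 3.
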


For the proof of Corollary \ref{tori} we require the following result from \cite{mondello-mondino-perales}.

\begin{thm}\label{torus-stability}
\rm (Mondello--Mondino--Perales) For each $n \in \mathbb{N}$ there is $\varepsilon > 0 $ such that if $X_i \in \mathfrak{M}_{sec}(n,-1,\varepsilon ) $ converges in the Gromov--Hausdorff sense to a space $X$ of Hausdorff dimension $k$ and $\beta_1(X) \geq k$, then $X$ is bi-H\"{o}lder homeomorphic to a flat $k$-dimensional torus.
\end{thm}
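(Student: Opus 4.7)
The plan is to prove the strictly stronger statement that under the hypotheses $X$ is isometric to a flat $k$-dimensional torus, which immediately yields the bi-H\"older homeomorphism. The strategy has three stages: reduce via rescaling to the almost non-negative curvature regime; transfer a $\mathbb{Z}^k$-abelian cover from the limit $X$ down to approximating covers of the $X_i$; and apply Corollary \ref{splitting} to force that cover to be $\mathbb{R}^k$ itself.

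First, replacing each $X_i$ by $\varepsilon^{-1} X_i$ places us in $\mathfrak{M}_{sec}(n, -\varepsilon^2, 1) \subset \mathfrak{M}_{Ric}(n, -(n-1)\varepsilon^2, 1)$. A standard contradiction argument letting $\varepsilon \to 0$ lets us assume $X$ is the Gromov--Hausdorff limit of a sequence $Y_i \in \mathfrak{M}_{Ric}(n, -\delta_i, 1)$ with $\delta_i \to 0$, Hausdorff dimension $k$, and $\beta_1(X) \geq k$. In this regime Theorem \ref{cc-split}, Theorem \ref{CC} and Corollary \ref{splitting} apply to any equivariant limit of isometry groups.

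Second, by Theorem \ref{slsc}, $X$ is semi-locally simply connected, so it has a well-defined universal cover and well-defined regular abelian covers. Using $\beta_1(X) \geq k$, there is a regular cover $\bar{X} \to X$ with deck group $\mathbb{Z}^k$. This cover can be transferred to $Y_i$ via Theorem \ref{SW} and Corollary \ref{SW-gap}: for $i$ large the homomorphism $H_1(Y_i) \to H_1(X)$ induced by approximation has kernel $H_1^{\rho_i}(Y_i)$, so the torsion-free part of $H_1(Y_i)$ surjects onto $\mathbb{Z}^k$, which determines compatible covers $\bar{Y}_i \to Y_i$ with deck groups $G_i$ admitting a distinguished $\mathbb{Z}^k$ quotient. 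By Theorem \ref{equivariant-compactness}, a subsequence of the actions on $\bar{Y}_i$ converges equivariantly to a closed group $G \leq Iso(\bar{X}')$ with $\bar{X}'/G = X$, and Theorem \ref{zamora} applied to the subgroups of $G_i$ generated by displacement-bounded elements ensures that a free abelian subgroup of rank $k$ persists in $G$ with Euclidean orbit growth.

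Third, applying Corollary \ref{splitting} to this $\mathbb{Z}^k$-subaction yields a metric product decomposition $\bar{X}' = \mathbb{R}^k \times Z$ with $\dim_{\mathcal{H}} Z \leq n - k$ and with the splitting preserved by $G$. Since $X$ has Hausdorff dimension exactly $k$ and the $\mathbb{Z}^k$-factor of $G$ acts as a cocompact lattice on $\mathbb{R}^k$, a nontrivial $Z$ would force $\dim_{\mathcal{H}} X > k$; hence $Z$ is a point, $\bar{X}' = \mathbb{R}^k$, and $X = \mathbb{R}^k / \Gamma$ for a cocompact lattice $\Gamma$, exhibiting $X$ as a flat $k$-torus. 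The main obstacle is the rank-preservation step: ensuring that the rank-$k$ free abelian quotient of $H_1(Y_i)$ does not degenerate in the equivariant limit into a torsion group or lose Euclidean displacement rate. This requires combining the uniform $\varepsilon$-content detection of Theorem \ref{KW} with the kernel description of Theorem \ref{SW}, and exploiting the sectional (not merely Ricci) curvature hypothesis for the Alexandrov triangle comparison that controls the displacement of the generators of the $\mathbb{Z}^k$ action in each $\bar{Y}_i$.
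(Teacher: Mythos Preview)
The paper does not prove Theorem \ref{torus-stability}; it is quoted from \cite{mondello-mondino-perales} as a black-box input, so there is no in-paper argument to compare against. Your proposal is therefore an attempt to supply a proof that the paper deliberately outsources.

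As written, the proposal has real gaps. Two of the tools you invoke do not do what you claim. Theorem \ref{zamora} gives an \emph{upper} bound on the rank of the approximating abelian groups in terms of the topological dimension of the limit of their orbits; it cannot be used to ``ensure that a free abelian subgroup of rank $k$ persists in $G$'', which is a lower-bound statement. Likewise, Corollary \ref{splitting} has as a hypothesis that the quotients $(Y_i/\Gamma_i,[q_i])$ converge to $\mathbb{R}^k$; in your setup the quotients $\bar Y_i/G_i = Y_i$ converge to $X$, which is not yet known to be anything like $\mathbb{R}^k$, so the corollary does not apply to your $\mathbb{Z}^k$-subaction. To get a splitting of $\bar X'$ you would instead need to produce $k$ independent lines directly from the limit group action and appeal to Theorem \ref{cc-split}, and that is exactly the ``rank-preservation'' step you flag as the main obstacle but do not carry out. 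The final sentence gestures at Theorem \ref{KW}, Theorem \ref{SW}, and Alexandrov comparison, but none of these yields the required uniform positive lower bound on the displacement of a full rank-$k$ set of generators in $\bar Y_i$; indeed Theorem \ref{KW} controls index, not displacement.

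There is also a dimension-counting slip in Stage 3: even if one had $\bar X' = \mathbb{R}^k \times Z$ with the $G$-orbits equal to the $Z$-fibers, the conclusion would be $X = \bar X'/G \cong \mathbb{R}^k$, which is noncompact and contradicts $X$ being a Gromov--Hausdorff limit of spaces of diameter $\leq 1$. The action you actually need is a cocompact $\mathbb{Z}^k$-lattice on the $\mathbb{R}^k$-factor, not an action whose orbits are the $Z$-fibers, so the invocation of Corollary \ref{splitting} is structurally misplaced. In short, the outline is plausible in spirit (lift a $\mathbb{Z}^k$-cover, split the limit, collapse $Z$), but the key analytic step---controlling the geometry of the $\mathbb{Z}^k$-action in the limit---is precisely what \cite{mondello-mondino-perales} establish using RCD machinery, and your sketch does not replace it.
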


\section{Proof of the main results}

\begin{proof}[Proof of Theorem \ref{main}:]
Let $p \in X$ be a $k$-regular point, $h_i : X \to X_i$ a sequence of Gromov--Hausdorff approximations, and set $p_i : = h_i (p)$. Then by Theorem \ref{KW-gap}, there is $\varepsilon_2 > 0 $ and a sequence $\eta_i \to 0$ such that the contents of the maps  $ B^{X_i}(p_i, \eta_i)  \to X_i$, $ B^{X_i}(p_i , \varepsilon_2) \to  X_i$ coincide.

By Theorem \ref{slsc}, there is  $ \varepsilon_1 \in (0, \varepsilon_2 ] $ such that for each $x \in X$, the content of the inclusion $ B^X(x,2 \varepsilon_1)  \to X $ is trivial. By Theorem \ref{SW}, all we need to show is that for large enough $i$, $H_1^{\varepsilon_1}(X_i)$ has rank $\leq n-k $. By Corollary \ref{SW-gap}, there is a sequence $\rho_i \to 0$ with the property that $H_1^{\rho_i}(X_i) = H_1^{\varepsilon_1}(X_i)$ for each $i$. 

By Theorem \ref{KW}, there are $\varepsilon _ 0 > 0 $, $C \in \mathbb{N}$, subgroups $N_i \leq H_1(X_i)$, and a sequence $\delta_i \in [\varepsilon _ 0 , \varepsilon _1] $ with the property that for each $x \in X_i$, the content of the map $ B^{X_i}(x, \delta_i )  \to  X_i$ contains $N_i$ as a subgroup of index $\leq C$

Let $ x_1, \ldots , x_m  \in X$ be such that $X = \cup_{j=1}^m B^X(x_j, \varepsilon_0/3)$, and set $x^i_j : = h_i (x_j)$. Then for large enough $i$, the balls $B^{X_i}(x^i_j, \varepsilon_0/2)$ cover $X_i$. This implies that for large enough $i$, each ball of radius $\rho_i$ in $X_i$ is contained in a ball of the form $B^{X_i}(x^i_j, \varepsilon_0)$. Hence if we let $\mathcal{U}_i$ denote the family $\{ B^{X_i} ( x_j^i , \delta_i )   \}_{j=1}^m$, then by Lemma \ref{refinement} we get
\[ H_1^{\rho_i}(X_i) \leq  H_1 (\mathcal{U}_i \prec  X_i ) \leq H_1^{\varepsilon_1}(X_i) = H_1^{\rho_i} (X_i) .      \]

Since $ H_1^{\mathcal{U}_i} (X_i)$ is generated by the contents of the inclusions $  B^{X_i}(x^i_j, \delta_i)  \to  X_i$ with $j \in \{ 1, \ldots , m \}$, the index of $N_i $ in $H_1^{\mathcal{U}_i}(X_i)$ is at most $C^m$. Therefore, the rank of $H_1^{\varepsilon_1}(X_i)$ equals the rank of $N_i$ for all large enough $i$.

Let $\Gamma_i \leq H_1(X_i)$ denote the content of the inclusion $B^{X_i}(p_i, \varepsilon_2) \to X_i$. Since $\varepsilon_2 \geq \varepsilon_1$,  $\Gamma_i$ contains $N_i$, and since $\Gamma_i$ equals the content of the inclusion $B^{X_i}(p_i, \eta_i) \to X_i$, and $\eta_i \leq \varepsilon_0$ for large enough $i$, the index of $N_i$ in $\Gamma_i$ is finite. Hence Theorem \ref{main} will follow from the following claim.

\begin{center}
\textbf{Claim:} For large enough $i$, $\Gamma_i$ has rank $\leq n- k$.
\end{center}
Let $\lambda_i \to \infty$ be a sequence that diverges so slowly that $\lambda_i \eta_i \to 0$ and the sequence $(\lambda_i X_i , p_i)$ converges in the pointed Gromov--Hausdorff sense to $\mathbb{R}^{k}$. We can achieve this since $p$ is $k$-regular and $\eta_ i \to 0$. 

Let $(Y_i, q_i)$ denote the regular cover of $(\lambda X_i, p_i)$ with Galois group $H_1(X_i)$. By Theorem \ref{compactness} and Theorem \ref{equivariant-compactness}, we can assume that the sequence $(Y_i, q_i)$ converges in the pointed Gromov--Hausdorff sense to a proper geodesic space $(Y,q)$, and the groups $H_1(X_i)$ converge in the equivariant Gromov--Hausdorff sense to some closed group $\Gamma \leq Iso (Y)$. Since all elements of $H_1(X_i) \backslash \Gamma_i$ move $q_i$ at least $\varepsilon _2 \lambda_i$ away, the equivariant Gromov--Hausdorff limit of $\Gamma_i$ equals $\Gamma$ as well. Note that from the definition of equivariant Gromov--Hausdorff convergence, it follows that the $\Gamma_i$-orbits of $q_i$ converge in the pointed Gromov--Hausdorff sense to the $\Gamma$-orbit of $q$.

By Corollary \ref{splitting}, $Y$ splits isometrically as a product $\mathbb{R}^{k} \times Z$ with $Z$ a proper geodesic space of Hausdorff dimension $\leq n - k$, such that the $Z$-fibers coincide with the $\Gamma$-orbits. Since the topological dimension is always dominated by the Hausdorff dimension (\cite{hurewicz-wallman}, Chapter 7), the topological dimension of $Z$ is at most $n- k$. Then by Theorem \ref{zamora}, the rank of $\Gamma_i$ is at most $n- k$ for large enough $i$.
\end{proof}

\begin{proof}[Proof of Corollary \ref{tori}:]
Let $\varepsilon > 0 $ be given by Theorem \ref{torus-stability}.  By Theorem \ref{main}, $\beta_1(X) \geq k$, and the result follows.
\end{proof}

\end{document}